\newcommand{\tcb}[1]{\textcolor{blue}{#1}}
\numberwithin{equation}{section}  
\newtheorem{lemma}{Lemma}[section]
\newtheorem{remark}{Remark}[section]
\newtheorem{theorem}{Theorem}[section]
\def\R{\mathbb{R}}
\def\cU{\mathcal{U}}
\def\cH{\mathcal{H}}
\def\cG{\mathcal{G}}
\def\bfp{\mathbf{p}}
\def\bfq{\mathbf{q}}
\def\r{\rangle}
\def\d{\mathrm{d}}
\begin{document}

\title{A generalized discontinuous Hamilton Monte Carlo for transdimensional sampling}

\author[a,b,c]{Lei Li\thanks{E-mail: leili2010@sjtu.edu.cn}}
\author[a]{Xiangxian Luo\thanks{E-mail: xiangxianluo@sjtu.edu.cn} }
\author[a]{Yinchen Luo\thanks{E-mail: luoyinchen@sjtu.edu.cn}}
\affil[a]{School of Mathematical Sciences, Shanghai Jiao Tong University, Shanghai, 200240, P.R.China.}
\affil[b]{Institute of Natural Sciences, MOE-LSC, Shanghai Jiao Tong University, Shanghai, 200240, P.R.China.}
\affil[c]{Shanghai Artificial Intelligence Laboratory}
\date{}

\maketitle

\begin{abstract}
In this paper, we propose a discontinuous Hamilton Monte Carlo (DHMC) to sample from dimensional varying distributions, and particularly the grand canonical ensemble. The DHMC was proposed in [Biometrika, 107(2)] for discontinuous potential where the variable has a fixed dimension. When the dimension changes, there is no clear explanation of the volume-preserving property, and the conservation of energy is also not necessary. We use a random sampling for the extra dimensions, which corresponds to a measure transform. We show that when the energy is corrected suitably for the trans-dimensional Hamiltonian dynamics, the detailed balance condition is then satisfied. For the grand canonical ensemble, such a procedure can be explained very naturally to be the extra free energy change brought by the newly added particles, which justifies the rationality of our approach. To sample the grand canonical ensemble for interacting particle systems, the DHMC is then combined with the random batch method to yield an efficient sampling method. In experiments, we show that the proposed DHMC combined with the random batch method generates samples with much less correlation when compared with the traditional Metropolis-Hastings method.
\end{abstract}

\section{Introduction}\label{sec:introduction}

The computation of statistics, such as the pressure and energy, is crucial for determining the physical properties of a system and plays a central role in statistical physics and chemistry \cite{frenkel2001understanding}. However, performing the integrals against the corresponding Gibbs distribution is prohibitive, so one often turns to the Monte Carlo methods \cite{frenkel2001understanding,rubinstein2016simulation}, i.e., drawing samples from the target distribution and estimating the statistics by the empirical average using the samples.
The Markov chain Monte Carlo (MCMC) methods \cite{gamerman2006markov,hastings1970monte} are a class of popular Monte Carlo methods for drawing samples approximately from the desired distribution, and the Metropolis-Hastings \cite{metropolis1953,hastings1970monte} algorithm is one of the most popular examples.
When treating with the distributions for interacting particle systems, direct application of the Metropolis-Hastings algorithm requires the computation of the difference of the energy for the acceptance rate, which is usually not very efficient because moving $O(1)$ particles usually needs $O(N)$ computational cost.  Using the splitting Monte Carlo, where the proposal is generated by some dynamics like the Langevin dynamics and the Hamiltonian dynamics, could be turned into efficient approximate sampling methods when combined with the random batch strategies \cite{li2020random,li2023splittinghamiltonianmontecarlo}.

We are concerned with distributions where the variables are from domains with varying dimensions. One particular example is the grand canonical ensemble \cite{frenkel2001understanding}. The grand canonical ensemble describes an open system of particles in thermodynamic equilibrium with a reservoir, exchanging both energy and particles while volume, shape, and other external coordinates of the system remain constant. Inspired by \cite{li2023splittinghamiltonianmontecarlo}, we turn our attention to the Hamiltonian Monte Carlo (HMC) \cite{duane1987hybrid, neal2011mcmc, barbu2020hamiltonian} for sampling from such distributions. The HMC is a specific MCMC method and can be viewed as a special case of the Metropolis-Hastings algorithm where the proposal is given by a Hamilton dynamics. However, HMC draws on principles from classical mechanics, treating the parameters of the target distribution as particles moving within a potential energy landscape. By utilizing gradient information from the log-probability density, HMC generates proposals that are more informed than those from traditional random walk methods, allowing it to explore the parameter space effectively \cite{barbu2020hamiltonian,beskos2013optimal,duane1987hybrid,neal2011mcmc}.
The  HMC  has gained a lot of attention in practice due to its scalability and efficiency in high-dimensional settings \cite{neal2011mcmc, beskos2013optimal}. In HMC, to sample from a distribution 
\begin{gather}\label{target_dist}
\pi(\d \bfq)\propto \exp(-\beta U(\bfq))\,\d \bfq
\end{gather}
 where $\bfq\in \R^D$ and $\beta$ is a chosen (fixed) parameter whose physical significance is the inverse temperature, one adds conjugate variables $\bfp\in \R^D$ called the momentum variable. Then, one considers
\begin{gather}
H(\bfp, \bfq)=U(\bfq)+K(\bfp),
\end{gather}
for some suitable function $K: \R^D\to \R$. Usually, $K(p)=\frac{1}{2}\bfp^T M^{-1}\bfp$ for some positive definite matrix $M$. The following Markov chain is then constructed starting with $(\bfp_n, \bfq_n)$  so that the invariant measure is $\tilde{\pi}(\d \bfp\d \bfq)\propto \exp(-\beta H(\bfp, \bfq))\,\d \bfp \d \bfq$, with $\pi$ being its marginal.
\begin{enumerate}[(a)]
\item Draw a new $\tilde{\bfp}_n \sim Z_\bfp^{-1}\exp(-\beta K(\bfp))$.
\item Run the Hamilton dynamics
\begin{gather}
\left\{
\begin{split}
&\dot{\bfq}=\nabla_\bfp H,\\
&\dot{\bfp}=-\nabla_\bfq H,
\end{split}
\right.
\end{gather}
for some time $T$, and obtain $(\tilde{\bfp}_{n+1}, \bfq_{n+1})$.
\item Negate the momentum variable to obtain $(\bfp_{n+1}, \bfq_{n+1})$. Then, accept this new sample with the Metropolis step with acceptance rate $\min\{1, \exp(-\beta(H(\bfp_{n+1}, \bfq_{n+1})-H(\bfp_n, \bfq_n))) \}$.
\end{enumerate}
For time-continuous dynamics, due to the conservation of energy for Hamilton dynamics $H(\bfp_{n+1}, \bfq_{n+1})-H(\bfp_n, \bfq_n)=0$, so the new sample should be accepted. This rejection step would be considered if we discretize the dynamics. However, if we aim to improve efficiency in applications like the interacting particle system, we can accept the proposal anyway, even if there is discretization error, which often vanishes as the time step goes to zero, especially when combined with the random batch strategy \cite{li2023splittinghamiltonianmontecarlo}. Moreover, the negation of the momentum is necessary only in theory to ensure the detailed balance, which is not necessary in practice, as one would draw a new momentum variable. 

In many applications, the distributions may have discrete variables like the grand canonical ensemble. These can be viewed as distributions that are piecewise smooth but have discontinuities across different domains. To address the limitations of HMC in handling nonsmooth or distributions with discrete variables, the authors of \cite{Nishimura_2020} introduced the Discontinuous Hamiltonian Monte Carlo (DHMC), expanding the applicability of HMC to a broader range of problems. A key feature of DHMC to treat discrete variables $n$ is to embed them into a continuous space and the potential is now piecewise smooth. The dual variable for $n$ would have the following Laplace momentum 
\begin{gather}
K(p_n)=|p_n|/m.
\end{gather}
This is beneficial as $\frac{dn}{dt}$ depends only on the signs of $p_n$ and not on its magnitude. So the integrator can use a small number of target density evaluations to jump through multiple discontinuities while approximately preserving the total energy.  See section \ref{sec:review_DHMC} below for a more detailed introduction, and see the original paper \cite{Nishimura_2020} for a complete explanation.

Though the original DHMC treated the distributions with discontinuity, the variables across different domains have the same dimension. Many practical scenarios involve probability distributions with variable dimensionality.  For instance, in statistical mechanics, systems that exchange particles and energy with a reservoir are described by the Grand Canonical Ensemble as mentioned above, resulting in a change of variable dimensions \cite{boinepalli2003grand,10.5555/547952}.  In the statistical literature, some problems involve an unknown number of model parameters. These are known as Bayesian model determination problems, which have a wide range of applications, such as change-point models, finite mixture models, variable selection, and Bayesian neural networks. For more details, readers are referred to \cite{rjmcmc}. Traditional sampling methods from such distributions include the
Reversible Jump Markov Chain Monte Carlo (RJMCMC) \cite{rjmcmc,23b6b6ec-42a0-3e0a-8bf2-dc861b85327c} and the transdimensional piecewise deterministic Markov processes \cite{doi:10.1080/01621459.2022.2099402}.


In this paper, we aim to propose an extended DHMC to treat distributions with varying dimensions.
This could be particularly useful in the case of the grand canonical ensemble for interacting particle systems, as we can combine it with the random batch strategy \cite{Jin_2020} to improve the efficiency as done in \cite{li2023splittinghamiltonianmontecarlo}. When the dimension changes, there is no clear explanation of the volume-preserving property of the Hamiltonian dynamics. We use a random sampling for the extra dimensions, which corresponds to a measure transform. There is no reason to require the conservation of energy during this jump, and we perform a certain correction for the energy. We show that when the energy is corrected suitably for the trans-dimensional Hamiltonian dynamics, the detailed balance condition is then satisfied. For the grand canonical ensemble, such a procedure can be explained very naturally to be the extra free energy change brought by the newly added particles, which justifies the rationality of our approach. 
We show in examples that this extended DHMC can effectively sample from target distributions where the dimension can vary while leveraging the advantages of Hamiltonian Monte Carlo.

The remainder of this paper is organized as follows.  In Section \ref{sec:review_DHMC} we give a brief review of discontinuous Hamiltonian Monte Carlo. Section \ref{sec:general_DHMC} introduces the general framework of our algorithm. In section \ref{section:discretization}, we discuss some details of the discretization and situate our algorithm in the framework of RJMCMC \cite{rjmcmc}. In section \ref{sec:grandcanonical} we apply our proposed algorithm to the simulation of the grand canonical ensemble and propose a version for the interacting particle systems that combines the potential splitting and the Random batch strategy to reduce computational complexity. In section \ref{sec:numerical} we present some numerical results to validate our method.

\section{A Brief Review of the DHMC}\label{sec:review_DHMC}
In this section, we give a brief introduction to the DHMC in \cite{Nishimura_2020} and refer the readers to the original paper for more details. 

The DHMC is based on a theory of discontinuous Hamiltonian dynamics. Suppose that the potential function $U$ in \eqref{target_dist} is piecewise smooth. To find the correct Hamiltonian dynamics under this potential function, one may use a sequence of smooth functions  $U_{\delta}(\mathbf{q'})=\int U(\mathbf{q})\phi_{\delta}(\mathbf{q'}-\mathbf{q})\text{d}\mathbf{q}$ to approximate $U$, where $\phi$ is a mollifier and take the limit of the dynamics as $\delta\to 0$. The behavior of the limiting dynamics at the discontinuity is as follows. 
Let $\nu$ denote the normal vector of the smooth hypersurface formed by the discontinuity points. When a trajectory $(\mathbf{q}(t), \mathbf{p}(t))$ of the dynamics intersects the discontinuity at time $t$, one then considers the orthogonal decomposition of the momentum:
\[
\mathbf{p}(t^-) = \mathbf{p}_{\parallel}(t^-) + \mathbf{p}_{\perp}(t^-),
\]
where $\mathbf{p}_{\parallel}(t^-)$ and $\mathbf{p}_{\perp}(t^-)$ are the tangential and normal components, respectively. The tangential component remains unchanged, while the normal component may vary. 
Consider the potential jump at the interface
\[
\Delta U = \lim_{\epsilon \to 0} \left[ U(\mathbf{q}(t^+) + \epsilon \mathbf{p}(t^-)) - U(\mathbf{q}(t^-)) \right].
\]  
If $\Delta U > K(\mathbf{p}(t^-)) - K(\mathbf{p}_{\parallel}(t^-))$, then the normal component of the momentum reverses direction,  
$$
\mathbf{p}_{\perp}(t^+) = -\mathbf{p}_{\perp}(t^-).
$$  
Otherwise, the direction of the normal component is not changed and the magnitude of $\mathbf{p}_{\perp}$ decreases to satisfy total energy conservation 
\[
K(\mathbf p(t^-))+U(\mathbf q(t^-))=K(\mathbf p(t^+))+U(\mathbf q(t^+)).
\]
Intuitively, the trajectory crosses the discontinuity if the kinetic energy associated with the normal component of the momentum is sufficient to overcome the potential energy gap. Otherwise, it reflects back from the hypersurface.

In the presence of $d'$ discrete parameters, one then embeds them into a continuous space by introducing latent variables $n\in\R^{d'}$ so that the distribution has a piecewise smooth density. Often, this embedding gives piecewise constant functions with respect to $n$.  For the discontinuous Hamiltonian dynamics associated with $n$, Nishimura et al. \cite{Nishimura_2020} proposed using a Laplace distribution for the momentum variable, given by  $K_n(p_n)\propto e^{-\|p_n\|_1/ m_n}$. Under this choice, the Hamilton equations corresponding to $n$ and $p_n$ are given by
\begin{equation}
    \dot{n}=\frac{1}{m_n}\text{sign}(p_n),\quad \dot{p}_n = -\nabla_n U(q, n). 
\end{equation}
Here, $\mathrm{sign}(p_n)$ is the componentwise signs of $p_n$. Note that in applications, $U(q, n)$ is often piecewise constant with respect to $n$, so $\nabla_n U(q, n)$ is zero away from the discontinuity. Across the discontinuity, the change of $p_n$ is then given by the discontinuous Hamiltonian dynamics above.  Notably, the evolution of  $n$ is governed by the sign of the momentum $p_n$, rather than its magnitude. This implies that if the sign of  $p_n$ does not change during time interval $[t,t+\Delta t]$, the trajectory can be computed exactly, ensuring exact preservation of the total energy. As demonstrated in \cite{Nishimura_2020}, this property is crucial for enabling the system to traverse multiple discontinuities while approximately preserving total energy, requiring only a single evaluation of the potential.

\section{Generalized Discrete Hamilton Monte Carlo for Trans-dimensional Sampling}\label{sec:general_DHMC}

In this section, we propose a general framework of the Discrete Hamilton Monte Carlo (DHMC) for dimension-variable probability distributions. 

\subsection{The effective Hamiltonian}\label{subsec:effectiveHal}

Consider the target probability distribution with varying-dimensional state space.
\begin{equation}
\label{target_dis}
\pi(\mathbf{q}^N, N)=\frac{1}{Z_{\beta}}e^{-\beta\mathcal{U}_N(\mathbf{q}^N)}\quad\text{where   } N\in \mathbb N.
\end{equation}
here $Z_{\beta} = \sum_{N\in \mathbb N_+}\int e^{-\beta\mathcal{U}_N(\mathbf{q}^N)}d\mathbf{q}^N$ is the normalization constant.
Here $\mathbf{q}^N\in V_{d_N} \subset \mathbb R^{d_N}$ is of dimension $d_N$. 
Without losing generality, we assume that $d_N$ is strictly  monotonically increasing with respect to $N$, which means that $N<M\iff d_N<d_M$. 
Here, we will require that the projection of $V_{d_{N+1}}$ onto the first $d_{N}$ coordinates will be $V_{d_N}$.

Similar to the original DHMC, we first embed the discrete indicator $N$ into the continuous real number axis by associating a parameter $n$.
Then, $n\in [N, N+1)$ indicates that the index is $N$. 
The state space can be viewed as the union of space of different dimensions $S=\bigcup_N V_{d_N}$.  We use $\mathbf{q}$ to represent a point in $S$.
Define the new probability distribution density as
\begin{gather}
\hat{\pi}(\mathbf{q},n)=\sum_{N\ge 0}\pi(\mathbf{q}, N)\mathbf{1}_{\{N \leq n<N+1\}}.
\end{gather}
\begin{remark}
The range for $n$ is $[0,\infty)$. This is equivalent to requiring that $\hat{\pi}=0$ for $n<0$ or the effective potential to be $+\infty$
for $n<0$. Note that $\hat{\pi}$ is not defined on $S\times [0,\infty)$. Instead, it is defined on $\bigcup_N V_{d_N}\times [N, N+1)$.
\end{remark}
Then we introduce conjugate variables, $p_n\in\mathbb R$ and $\mathbf{p}\in \bigcup_N \mathbb R^{d_N}$ to $n$ and $\mathbf{q}$ respectively. Then, the joint distribution of $(\mathbf{q},\mathbf{p},n,p_n)$ is defined as follows.
\begin{gather}\label{formal-dist}
\bar{\pi}(\mathbf{q},\mathbf{p},n,p_n)=\hat{\pi}(\mathbf{q},n)\pi_{p}(\mathbf{p}, n)\pi_{p_n}(p_n)
\end{gather}
where 
\begin{gather}
\begin{split}
& \pi_{p}(\mathbf{p}, n) =\sum_{N\geq0} Z_N^{-1}e^{-\beta K(\mathbf p)}\mathbf{1}_{\{N \leq n<N+1\}},\quad K(\mathbf{p}) = \frac{1}{2m}\|\mathbf{p}\|^2,\quad Z_N=\left(\frac{2m\pi}{\beta}\right)^{d_N/2}, \\
& \pi_{p_n}(p_n)=Z_n^{-1}e^{-\beta K_{n}(p_n)},\quad K_{n}(p_n) = \frac{|p_n|}{m_n},\quad Z_n=\frac{\beta}{2m_n}.
\end{split}
\end{gather}
Here, the $\pi$ in $Z_N$ is the ratio of a circle's circumference to its diameter, which is a constant. This should not be confused with the target distribution in \eqref{target_dis}. Note that $ \pi_{p}(\mathbf{p}, n)$ is not a probability measure on its own and could be understood as a conditional probability measure of $\mathbf{p}$. However, when combined with  $\hat{\pi}$ and $\pi_{p_n}$, the resulting distribution $\bar\pi$ is a valid probability measure. Obviously, the marginal distribution with respect to $\mathbf{q}$ is the target distribution. 

The Hamiltonian of this system is then given by
\begin{multline}\label{Hamiltonina}
\mathcal{H}(\mathbf{q},\mathbf{p},n,p_n)=-\frac{1}{\beta}\log (\bar{\pi}(\mathbf{q},\mathbf{p},n,p_n))\\
= \sum_{N\ge 0} \left( \mathcal{U}_N(\mathbf q)+K(\mathbf p)  + \frac{d_N}{2\beta}\log \left(\frac{2m\pi}{\beta}\right)\right)\mathbf{1}_{\{N \leq n<N+1\}}+K_n(p_n)+C,
\end{multline}
\tcb{}
which is a piecewise smooth function with discontinuity at integer values of $n$.
In this setting, $K(p)$ and $K_n(p_n)$ are the  ``kinetic energies", which are Gaussian and the Laplace, respectively.
One may understand 
\[
\sum_{N\ge 0} \left( \mathcal{U}_N(\mathbf q)+K(\mathbf p) \right)\mathbf{1}_{\{N \leq n<N+1\}}+K_n(p_n)
\]
as the energy, while $\sum_N \frac{d_N}{2\beta}\log (\frac{2m\pi}{\beta}) \mathbf{1}_{\{N \leq n<N+1\}}$
as the extra free energy associated with the dimension change.

\subsection{The Generalized Discontinuous Hamiltonian Monte Carlo}\label{subsec:dhmc}

We now consider constructing the Hamiltonian dynamics for $(\mathbf{q},\mathbf{p},n,p_n)$.
When $n\in (N, N+1)$ for some integer $N$, the $(\mathbf{q},\mathbf{p},n,p_n)$ system follows the Hamiltonian equation
\begin{equation}
\label{hmc_1}
    \begin{dcases}
        &\frac{\d\mathbf{q}}{\d t}=\nabla_{\mathbf{p}}\mathcal{H} = \frac{1}{m}\mathbf{p}\\
        &\frac{\d\mathbf{p}}{d t}=-\nabla_{\mathbf{q}}\mathcal{H} = -\nabla_{\mathbf{q}}\mathcal{U}_N(\mathbf{q})\\
        &\frac{\d n}{\d t}= \nabla_{p_n}\mathcal H = \frac{\text{sign}(p_n)}{m_n}\\
        &\frac{\d p_n}{\d t}= -\nabla_{n}\mathcal H = 0\\
    \end{dcases}
\end{equation}
Here, we make the convention that $\mathrm{sign}(0)=1$ so that $n$ would change even if $p_n=0$ (this has zero probability though).
Now consider that $n$ hits the discontinuity, i.e., when $n(t'-)=N+1$ for some $t'$ (which means that $n$ hits $N+1$ from left at time $t'$) or $n(t'-)=N$
(which means that $n$ hits $N$ from right at some time $t'$).

We consider $n(t'-)=N+1$ first, in which case $p_n>0$. If $n$ successfully transits into $[N+1, N+2)$, the dimension will change from $d_N$ to $d_{N+1}$. Unlike the DHMC for fixed dimensions, we must determine the components for the extra dimensions.
We will generally assume that the new variables are given by 
\[
\tilde{ \mathbf{q}}=\big[\mathbf{q},\mathbf{q}_{\mathrm{new}}\big],\mathbf{\tilde p}=\big[\mathbf{p},\mathbf{p}_{\mathrm{new}}\big].
\] 
Similar to DHMC, when the indicator $n$ encounters discontinuities, the algorithm calculates the energy difference required to traverse these discontinuities and compares it with the current kinetic energy relative to $p_n$. The energy difference $\Delta \cH$ here is stochastic due to the sampling of the new parameter.  Let us construct the dynamics across this discontinuity. We will sample $(\mathbf{q}_{\mathrm{new}},\mathbf{p}_{\mathrm{new}})$ from $\pi_{N,N+1}(\cdot,\cdot\ ;\textbf{q},\textbf{p})$. With this choice, the volume element has changed, according to the following
\begin{gather}\label{eq:measuretransform}
\d\mathbf{q}\, \d\mathbf{p}\to  \pi_{N,N+1}(\mathbf{q}_{\mathrm{new}},\mathbf{p}_{\mathrm{new}}; \mathbf{q}, \mathbf{p}) \d\mathbf{q}\,\d\mathbf{q}_{\mathrm{new}}\d\mathbf{p}\, \d\mathbf{p}_{\mathrm{new}}.
\end{gather}
This type of measure transformation does not quite agree with the usual volume-preserving property of Hamiltonian dynamics for the fixed dimension case. However, it satisfies
\[
\iint_{A}\d\mathbf{q}\, \d\mathbf{p}=\iint_{\bar{A}}\pi_{N,N+1}(\mathbf{q}_{\mathrm{new}},\mathbf{p}_{\mathrm{new}};\mathbf{q}, \mathbf{p})\, \d\tilde{\mathbf{q}}\, \d\tilde{\mathbf{p}}
\]
for all $A\in\mathcal{B}( V_{d_N}\times \R^{d_N})$ and $\bar{A}=\{ (\bfq, \bfq_{new},\bfp, \bfp_{new} )\in V_{d_{N+1}}\times \R^{d_{N+1}}: (\bfq,\bfp)\in A \}$. Taking this into consideration, this measure transformation can somehow be an analogue of the volume-preserving property. However, as the standard volume element is $\d\tilde{\mathbf{q}}\, \d\tilde{\mathbf{p}}=\d\mathbf{q}\, \d\mathbf{q}_{\mathrm{new}} \d\mathbf{p}\, \d\mathbf{p}_{\mathrm{new}}$,  we must compensate this in the Hamiltonian. The correction term $\delta \mathcal{G}$ (extra "free energy gap") resulted from the random sampling of new parameters is then given by
\begin{equation}
\delta \mathcal{G}:=\frac{1}{\beta}\text{log}(\pi_{N,N+1}(\mathbf{q}_{\mathrm{new}},\mathbf{p}_{\mathrm{new}}; \mathbf{q}, \mathbf{p})).
\end{equation}
In fact, due to the conservation of $\cH$, the value of $\pi(q, p, n, p_n)$ will be unchanged after crossing the discontinuity. The extra sampling density $\pi_{N, N+1}$, however, appears in the integration, so we must introduce the correction to cancel this density. In particular, we define the free energy barrier 
\begin{gather}\label{eq:gap1}
\begin{split}
&\delta \mathcal{H}^+:=\Delta \cH^+ +\delta \mathcal{G}\\
&=\Delta U^+ + \Delta K^+ +  \frac{d_{N+1}-d_N}{2\beta}\log(\frac{2m\pi}{\beta}) + \frac{1}{\beta}\text{log}(\pi_{N,N+1}(\mathbf{q}_{\mathrm{new}},\mathbf{p}_{\mathrm{new}}; \mathbf{q}, \mathbf{p})).
\end{split}
\end{gather}
Here, $\Delta\cH^+$ indicates the Hamiltonian jump if we increase the dimension. Then, the extended DHMC across the interface is then finished similarly as the original DHMC. In particular, if the kinetic energy of $p_n$ is large enough to overcome to the free energy barrier, then the jump succeeds and the direction of $p_n$ does not change, and the magnitude will adjust due to energy conservation. If $K_{n}(p_n)$ is smaller than the barrier, then the movement of indicator $n$ will bounce back, which means that the sign of $p_n$ will flip, and the newly added parameter will be deleted.

When $n(t'-)=N$, we simply take 
\begin{gather}
\bar{\mathbf{q}}\leftarrow\mathbf{q}|_1^{d_{N-1}},\quad \bar{\mathbf{p}}\leftarrow\mathbf{p}|_1^{d_{N-1}}.
\end{gather}
where $\mathbf{q}|_1^{d_{N-1}}$ indicates the first $d_{N-1}$ components of $\mathbf{q}$. To find the energy barrier, we again 
take into the reference measure transform in \eqref{eq:measuretransform}, and then 
\begin{gather}\label{eq:gap2}
\delta \cH^- :=\Delta \cH^- -\frac{1}{\beta}\log(\pi_{N-1,N}(\mathbf{q}|_{d_{N-1}+1}^{d_{N}}, \mathbf{p}|_{d_{N-1}+1}^{d_{N}}; \bar{\mathbf{q}}, \bar{\mathbf{p}})).
\end{gather}
where $\bfq|_{d_{N-1}+1}^{d_N}$ indicates the vectors made by the components from the $(d_{N-1}+1)$th one to the $d_N$th (or the last $d_N-d_{N-1}$ components) of $\bfq$. Similarly, $\Delta\cH^-$ indicates the Hamiltonian jump if we decrease the dimension. 
\begin{remark}
We remark that if $N=0$ and $n(t'-)=0$, one has $\Delta \cH=+\infty$ so $n$ will stay in $[0,\infty)$.
\end{remark}

Here, we make agreement that we may use the generic symbol $\delta \cH$ to represent either $\delta\cH^+$
or $\delta\cH^-$ and the concrete meaning should be clear in the context. If the concrete meaning needs to be emphasized, we then use $\delta\cH^+$ or $\delta\cH^-$. Similarly, we will use $\Delta\cH$ to mean the Hamiltonian jump if the change of dimension is clear.

If we fix the evolution time, the resulting Markov chain may not have good ergodicity properties. Instead, we follow the work of \cite{Nishimura_2020} to adopt random evolution time $T\sim\mathrm{UNIF}[T_\text{min},T_\text{max}]$. Given parameter $(\bfq, n)$, we first sample the auxiliary parameter $(\bfp,p_n)$ and then evolve the $(\mathbf{q},\mathbf{p},n,p_n)$ system for random time $T$.
We show the details in Algorithm \ref{DHMC}.

\begin{algorithm}[ht]
    \caption{general DHMC algorithm} \label{DHMC}
    \begin{algorithmic}
     \REQUIRE   $(\mathbf q,n)$, $N_{\text{sample}}$, $T_{\text{min}}$, $T_{\text{max}}$.
    \FOR{i=1,2,\ldots,$N_{\text{sample}}$}
    \STATE Sample $\mathbf{p}$ from $\pi_p(\cdot)$ and sample $p_n$ from $\pi_{n}(\cdot)$
    \STATE Sample $T$ from $\mathrm{UNIF}[T_{\text{min}},T_{\text{max}}]$
     \STATE Simulate dynamics of $(\mathbf{q},\mathbf{p},n,p_n)$ by Algorithm \ref{t-H dynamics} for time $T$ and obtain $(\mathbf{\tilde q},\mathbf{\tilde p},\tilde n,\tilde p_n)$
     \STATE update parameter $(\mathbf{q},n)\leftarrow( \mathbf{\tilde q},\tilde n)$
    \ENDFOR
    \end{algorithmic}
\end{algorithm}

\begin{algorithm}[ht]
    \caption{simulation of trans-dimensional Hamiltonian dynamics} \label{t-H dynamics}
    \begin{algorithmic}
     \REQUIRE   $(\mathbf q,\mathbf p,n,p_n)$ with $n\in [N, N+1]$, $T>0$ and initial time $t=0+$.
    \STATE Simulate dynamics of $(\mathbf{q},\mathbf{p},n,p_n)$ by System \eqref{hmc_1} upto $t=T$.
    \IF{For some $t'\in (0, T]$, $n(t'-)=N$ or $n(t'-)=N+1$}
     \IF{$n(t'-)=(N+1)$ (and thus $p_n>0$)}
       \STATE Sample $(\mathbf{q}_{\mathrm{new}},\mathbf{p}_{\mathrm{new}})$ from $\pi_{N,N+1}(\cdot)$ and propose $\tilde{\mathbf{q}}\leftarrow \big[\mathbf{q},\mathbf{q}_{\mathrm{new}}\big], \tilde{\mathbf{p}}\leftarrow \big[\mathbf{p},\mathbf{p}_{\mathrm{new}}\big]$
       \STATE Compute $\delta \cH(\mathbf{q},\mathbf{p}\to \tilde{\mathbf{q}},\tilde{\mathbf{p}})$ by \eqref{eq:gap1}.
     \ELSE
        \STATE Propose $\tilde{\mathbf{q}}\leftarrow\mathbf{q}|_1^{d_{N-1}},\tilde{\mathbf{p}}\leftarrow\mathbf{p}|_1^{d_{N-1}}$.
        \STATE  Compute $\delta \cH(\mathbf{q},\mathbf{p}\to \tilde{\mathbf{q}},\tilde{\mathbf{p}})$ by \eqref{eq:gap2}.
       \ENDIF
       \IF{$K(p_n)\geq \delta\cH$}
                 \STATE Accept the new point and update $\tilde p_n$ such that  $K(p_n)-\delta\cH=K(\tilde p_n)$, and set $t=t'+$.
        \ELSE
                \STATE Reject the new point and flip the sign of $p_n$ and set $t=t'+$.
       \ENDIF
     \ENDIF
    \end{algorithmic}
\end{algorithm}

Here, we make some extra comments about the method.
\begin{enumerate}[(a)]
\item If $\mathbf{q}_{\mathrm{new}}$ and $\mathbf{p}_{\mathrm{new}}$ are independent and  $\mathbf{q}_{\mathrm{new}}\sim\pi_q(\cdot)$, $\mathbf{p}_{\mathrm{new}}\sim \mathcal N(0,\frac{m}{\beta}I_{d_{N+1}-d_N})$, the free energy correction will cancel out the term relative to $\mathbf{p}$ so that
$$
\delta \cH= \Delta U + \frac{1}{\beta}\text{log}(\pi_{q}(\mathbf{q}_{\mathrm{new}}))
$$

\item When we increase the dimension or decrease the dimension, we do not have to append the newly added components or delete components at the end. We may add them or delete them at random locations. Moreover, some general dimension matching could be performed similarly as in RJMCMC literature (as discussed in chapter 3 of \cite{brooks2011handbook}), which is, however, not very suitable for our DHMC setting, especially for the grand canonical ensemble as in section \ref{sec:grandcanonical}. 

\item
Note that the dynamics considered in Algorithm \ref{DHMC} is time-continuous. In real simulations, we may discretize the dynamics
, and the change in $n$ may include multiple discontinuity points. In such cases, we may consider the total energy barrier $\delta\cH$ as an analogue to the total energy gap in the original DHMC method \cite{Nishimura_2020}. The correction $\delta \cG$ may be computed directly for the overall extra dimensions without really performing each jump. See section \ref{section:discretization} for more details.

\item 
To ensure that the dynamics are reversible, which can ensure the detailed balance condition discussed in chapter \ref{subsec:theory}, we require the distribution of the new parameters to be even with respect to $\mathbf{p}_{\mathrm{new}}$, that is,
\[ 
\pi_{N,N+1}(\mathbf{q}_{\mathrm{new}}, \mathbf{p}_{\mathrm{new}}; \mathbf{q}, \mathbf{p}) = \pi_{N,N+1}(\mathbf{q}_{\mathrm{new}}, -\mathbf{p}_{\mathrm{new}}; \mathbf{q}, -\mathbf{p}), \quad \forall \mathbf{q}_{\mathrm{new}}, \mathbf{p}_{\mathrm{new}}, \mathbf{q}, \mathbf{p}, N.
\]
\end{enumerate}

\subsection{Theoretical properties of DHMC}\label{subsec:theory}

In this section, we analyze the theoretical properties of DHMC. Especially, we will prove the detailed balance condition of the ideal DHMC algorithm. In classical HMC theory, the detailed balance condition results from the combined effects of reversibility, volume preservation, and energy conservation. However, in the problem discussed in this paper, due to the changing dimensions, these first two properties mentioned above do not directly hold. However, due to the construction of DHMC here, the properties are expected to hold as well.

For brevity, let $\varphi = (\textbf{q}, n),\psi = (\textbf{p},p_n)$. With slight abuse of notation, we also write $\varphi_N = (\textbf{q}, n)\in V_{d_N}\times [N,N+1),\psi_N = (\textbf{p},p_n)\in \R^{d_N}\times\R$, when $\dim(\textbf{q}) = \dim(\textbf{p})=d_N$.

As mentioned before, suppose we want to sample from a distribution $\hat\pi(\mathbf{q},n)$ with multiple modals of different dimensions.
When a trans-dimensional jump is proposed, to maintain reversible, 
we should have 
\begin{equation}
\int_{A\times B} \hat{\pi}(\d\varphi)\Gamma(\varphi\rightarrow \d\varphi')=\int_{B\times A} \hat{\pi}(\d\varphi')\Gamma(\varphi'\rightarrow \d\varphi),
\end{equation}
where $A\subset \mathbb{R}^{d_N}\times [N,N+1),B\subset\mathbb{R}^{d_{N'}}\times [N',N'+1)$ are arbitrary Borel sets and $\Gamma(\cdot,\cdot)$ is the transition kernel of ideal DHMC algorithm. Our goal in this subsection is to verify this for the ideal DHMC framework. Without loss of generality, we let
\begin{gather}
m_n=1.
\end{gather}
Let $\Gamma_T(\varphi_N,\psi_N\to \varphi_M, \psi_M)$ be the transition kernel of the DHMC dynamics for time length $T$.
By the well-known Markov property, the transition kernel of DHMC dynamics satisfies the Chapman-Kolmogorov equation
\begin{multline}\label{eq:ckexpansion}
\Gamma_{2T}(\varphi_N,\psi_N\to \varphi_M, \psi_M) \\
=\sum_{K=0}^\infty \int \Gamma_{T}(\varphi_N,\psi_N\to \varphi_K, \psi_K) \Gamma_{T}(\varphi_K,\psi_K\to \varphi_M, \psi_M) \d\varphi_K \d\psi_K.
\end{multline}
Let $\Gamma_T(\varphi_N\to\varphi_M)$ be the transition kernel of the ideal DHMC sampling algorithm for evolution time $T$.
By the construction of DHMC, each step, $\psi_N$ is resampled from a fixed measure $\pi_\psi(\psi_N)=\pi_{p}(\textbf{p},N)\pi_{p_n}(p_n)$ and thus
\begin{gather}
\Gamma_T(\varphi_N\to\varphi_M)=\int \pi_\psi(\psi_N)\Gamma_T(\varphi_N,\psi_N\to \varphi_M, \psi_M) d\psi_Nd\psi_M
\end{gather}

To establish the detail balance, the following is the most important one.
\begin{lemma}\label{lmm:detailbalance}
Assume that $\pi_{N,N+1}(\mathbf{q}_{\mathrm{new}}, \mathbf{p}_{\mathrm{new}}; \mathbf{q}, \mathbf{p})$ is even in $\mathbf{p}$ and $\mathbf{p}_{\mathrm{new}}$.
For evolution time $T<1$, the transition kernel of DHMC dynamics satisfies the detailed balance condition.    
\begin{gather}\label{eq:detailbalance}
\bar\pi(\varphi_N,\psi_N)\Gamma_T(\varphi_N,\psi_N\to \varphi_M, \psi_M) = \bar\pi(\varphi_M,-\psi_N)\Gamma_T(\varphi_M,-\psi_M\to \varphi_N, -\psi_M).
\end{gather}
\end{lemma}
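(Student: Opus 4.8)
The plan is to exploit the constraint $T<1$ to reduce the analysis to transitions between adjacent dimensions, and then to verify the detailed balance relation separately in the three possible scenarios. Since $m_n=1$ and $\dot n=\mathrm{sign}(p_n)=\pm 1$ away from the interface, in time $T<1$ the indicator $n$ can cross at most one integer, so $\Gamma_T(\varphi_N,\psi_N\to\cdot)$ is supported on $M\in\{N-1,N,N+1\}$. Accordingly I would split into: (i) no interface crossing, where the trajectory is a smooth Hamiltonian flow within a single $[N,N+1)$; (ii) a reflection, where $n$ reaches an integer but the normal kinetic energy $K_n(p_n)$ is below the barrier $\delta\cH$, so $p_n$ flips and the dimension is unchanged; and (iii) a genuine dimension change, where the barrier is overcome and components are appended or deleted.

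For cases (i) and (ii) the transition kernel is a point mass $\delta_{\Phi_T(\varphi_N,\psi_N)}$ at a deterministic image, and detailed balance reduces to the three classical ingredients: Liouville's theorem (the smooth flow has unit Jacobian), conservation of $\cH$ along the flow so that $\bar\pi$ is preserved, and time-reversibility $R\circ\Phi_T\circ R=\Phi_T^{-1}$ under the momentum flip $R$. Since $\bar\pi$ is even in $(\mathbf{p},p_n)$ (because $K$ and $K_n$ are even), these combine in the standard way. For the reflection in (ii) I would invoke the volume- and energy-preserving character of the refraction/reflection map at the discontinuity established in \cite{Nishimura_2020}; the only new point is that the barrier now carries the free-energy correction, but since no new variables are sampled in a reflection, the classical argument applies verbatim.

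The heart of the matter is case (iii). Here the increase map is not a point mass: after flowing to the interface state $(\mathbf{q}^*,\mathbf{p}^*,p_n^*)$, one samples $(\mathbf{q}_{\mathrm{new}},\mathbf{p}_{\mathrm{new}})\sim\pi_{N,N+1}$, adjusts $p_n$ through $K_n(\tilde p_n)=K_n(p_n^*)-\delta\cH^+$, and flows onward. I would test the identity \eqref{eq:detailbalance} against arbitrary $f(\varphi_N,\psi_N)$ and $g(\varphi_{N+1},\psi_{N+1})$, so that the sampling density and the delta factors become a single change of variables $G:(\varphi_N,\psi_N,\mathbf{q}_{\mathrm{new}},\mathbf{p}_{\mathrm{new}})\mapsto(\varphi_{N+1},\psi_{N+1})$ on the extended phase space of matching dimension $2d_{N+1}+2$. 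Two computations then drive the proof. First, $G$ has unit Jacobian: the two smooth flows are volume preserving, the appending step is the identity on the new coordinates, and the $p_n$-adjustment has $\partial\tilde p_n/\partial p_n^*=1$ because $\delta\cH^+$ is independent of $p_n^*$. Second, tracing $\cH$ across the jump with the correction $\delta\cH^+=\Delta\cH^++\delta\cG$ gives the key cancellation
\begin{gather*}
\bar\pi(\varphi_N,\psi_N)\,\pi_{N,N+1}(\mathbf{q}_{\mathrm{new}},\mathbf{p}_{\mathrm{new}};\mathbf{q}^*,\mathbf{p}^*)=\bar\pi(\varphi_{N+1},\psi_{N+1}),
\end{gather*}
so the measure transform \eqref{eq:measuretransform} exactly absorbs the sampling density. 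This is precisely the role for which $\delta\cG$ was designed.

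It remains to match this with the reverse (dimension-decreasing) transition, which is deterministic. I would show that the decrease map $\tilde G$ applied to the momentum-flipped state $(\varphi_{N+1},-\psi_{N+1})$ reproduces $(\varphi_N,-\psi_N)$; this is the reversibility statement, and it hinges on two facts: the evenness of $\pi_{N,N+1}$ in $(\mathbf{p},\mathbf{p}_{\mathrm{new}})$, which guarantees that the correction $\delta\cG$ evaluated along the reverse trajectory equals the forward one, and the algebraic identity $\delta\cH^-=-\delta\cH^+$ between the barriers of \eqref{eq:gap1} and \eqref{eq:gap2}, which follows by direct substitution together with $\Delta\cH^-=-\Delta\cH^+$. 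Combining the unit Jacobian, the density cancellation, the evenness $\bar\pi(\varphi_{N+1},-\psi_{N+1})=\bar\pi(\varphi_{N+1},\psi_{N+1})$, and reversibility then yields \eqref{eq:detailbalance} in case (iii). I expect the main obstacle to be the careful bookkeeping of this last step, namely verifying that the stochastic increase map and the deterministic decrease map are exact time reverses once momenta are flipped, since this is where the free-energy correction, the reference measure transform, and the sign conventions must all be reconciled simultaneously.
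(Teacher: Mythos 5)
Your overall architecture coincides with the paper's own proof: the same three-case decomposition (no crossing, reflection, accepted jump), and for the accepted jump the same key ingredients --- volume preservation of the flow, conservation of the corrected Hamiltonian giving exactly your cancellation $\bar\pi(\varphi_N,\psi_N)\,\pi_{N,N+1}=\bar\pi(\varphi_{N+1},\psi_{N+1})$, reversibility under momentum flip, evenness of $\pi_{N,N+1}$, and the identity $\delta\cH^-=-\delta\cH^+$. The only difference in case (iii) is presentational: you integrate against test functions and exhibit a unit-Jacobian change of variables on the extended space, while the paper manipulates Dirac delta kernels directly (and takes the deterministic deletion as the forward direction rather than the stochastic insertion); these are equivalent bookkeeping devices.

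There is, however, a genuine gap in your case (ii). You assert that ``no new variables are sampled in a reflection,'' so the fixed-dimension reflection argument of \cite{Nishimura_2020} applies verbatim. That is not how the algorithm works for an upward reflection: when $n$ hits $N+1$ from below, the algorithm \emph{first} samples $(\bfq_{\mathrm{new}},\bfp_{\mathrm{new}})\sim\pi_{N,N+1}$, and the barrier $\delta\cH^+$ in \eqref{eq:gap1} depends on that sample; the bounce occurs on the random event $K_n(p_n)<\delta\cH^+$. Hence the reflection kernel is not a deterministic point mass but a point mass weighted by the rejection probability
\begin{equation*}
P=\int \mathbf{1}_{\{K_n(p_n)<\delta\cH^+\}}\,\pi_{N,N+1}\bigl(\bfq_{\mathrm{new}},\bfp_{\mathrm{new}};F_{T_1}(\bfq,\bfp)\bigr)\,\d\bfq_{\mathrm{new}}\,\d\bfp_{\mathrm{new}},
\end{equation*}
which is generically strictly between $0$ and $1$ (while a rejected deletion is indeed deterministic, so the two directions are not symmetric). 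Detailed balance in this case therefore requires showing that the reversed trajectory hits the same interface configuration and carries the \emph{same} weight $P$; the paper obtains this from reversibility of the flow, conservation of $\cH$, and the evenness of $\pi_{N,N+1}$ in the momenta --- precisely the hypotheses you invoke only in case (iii). Your framework contains the tools to repair this, but as written the case (ii) step fails.
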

We attach the proof of Lemma \ref{lmm:detailbalance} in Appendix \ref{app:lmm1proof}.

\begin{theorem}
\label{detail_balance_T}
    For any given time $T>0$, the transition kernel of ideal DHMC algorithm with fixed simulation time $\Gamma_T(\varphi_N\to\varphi_M)$ satisfies the detailed balance condition with respect to $\hat\pi(\varphi)$
$$
\hat\pi(\varphi_N)\Gamma_T(\varphi_N\to\varphi_M)=\hat\pi(\varphi_M)\Gamma_T(\varphi_M\to\varphi_N)
$$
 Consequently, the ideal DHMC Algorithm \ref{DHMC} with random simulation time satisfies the detailed balance condition with respect to the target distribution of $\hat\pi(\mathbf{q},n)$
\end{theorem}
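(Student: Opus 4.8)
The plan is to deduce Theorem~\ref{detail_balance_T} from Lemma~\ref{lmm:detailbalance} in two stages: first promote the phase-space detailed balance of the \emph{dynamics} kernel $\Gamma_T(\varphi_N,\psi_N\to\varphi_M,\psi_M)$ from short times $T<1$ to arbitrary $T>0$, and only then marginalize out the momenta $\psi$ to obtain the stated balance for the \emph{sampling} kernel $\Gamma_T(\varphi_N\to\varphi_M)$. The reason for this order is that the sampling kernel resamples the momentum at each application and hence does \emph{not} enjoy a semigroup property, whereas the dynamics kernel does and obeys the Chapman--Kolmogorov relation \eqref{eq:ckexpansion}.

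For the first stage, the short-time restriction $T<1$ in Lemma~\ref{lmm:detailbalance} only reflects that, since $|\dot n|=1/m_n=1$, the trajectory can touch at most one discontinuity of $n$ over an interval of length less than one; the long-time statement should follow purely from the Markov structure. I would show that if the phase-space balance
\[
\bar\pi(\varphi_N,\psi_N)\,\Gamma_s(\varphi_N,\psi_N\to\varphi_M,\psi_M)=\bar\pi(\varphi_M,-\psi_M)\,\Gamma_s(\varphi_M,-\psi_M\to\varphi_N,-\psi_N)
\]
holds at two times $s$ and $t$, then it holds at $s+t$. To do so I insert the Chapman--Kolmogorov expansion \eqref{eq:ckexpansion} for $\Gamma_{s+t}$, apply the time-$s$ balance to the first factor and the time-$t$ balance to the second, use that $\pi_\psi$ (a Gaussian in $\mathbf p$ times a Laplace in $p_n$) is even, so that $\bar\pi(\varphi_K,-\psi_K)=\bar\pi(\varphi_K,\psi_K)$, and finally substitute $\psi_K\to-\psi_K$ in the intermediate integral. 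Reassembling via Chapman--Kolmogorov reproduces the time-$(s+t)$ balance with reversed momenta on the right. Iterating, any $T>0$ can be split into $k$ equal pieces $T/k<1$, so $k-1$ applications of this additivity give the phase-space detailed balance for all $T>0$.

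For the second stage, I integrate the phase-space identity over $\psi_N$ and $\psi_M$. Writing $\bar\pi(\varphi_N,\psi_N)=\hat\pi(\varphi_N)\pi_\psi(\psi_N)$ and recalling $\Gamma_T(\varphi_N\to\varphi_M)=\int\pi_\psi(\psi_N)\,\Gamma_T(\varphi_N,\psi_N\to\varphi_M,\psi_M)\,\d\psi_N\,\d\psi_M$, the left-hand side collapses to $\hat\pi(\varphi_N)\Gamma_T(\varphi_N\to\varphi_M)$. On the right-hand side I invoke the evenness of $\pi_\psi$ to replace $\pi_\psi(-\psi_M)$ by $\pi_\psi(\psi_M)$ and perform the substitution $(\psi_N,\psi_M)\to(-\psi_N,-\psi_M)$, whose Jacobian is one, which turns the right-hand side into $\hat\pi(\varphi_M)\Gamma_T(\varphi_M\to\varphi_N)$. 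This yields the fixed-time detailed balance $\hat\pi(\varphi_N)\Gamma_T(\varphi_N\to\varphi_M)=\hat\pi(\varphi_M)\Gamma_T(\varphi_M\to\varphi_N)$.

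Finally, for the random-time assertion, the kernel of Algorithm~\ref{DHMC} is the mixture $\Gamma(\varphi_N\to\varphi_M)=\frac{1}{T_{\max}-T_{\min}}\int_{T_{\min}}^{T_{\max}}\Gamma_T(\varphi_N\to\varphi_M)\,\d T$; since detailed balance with respect to $\hat\pi$ holds for each fixed $T$ and is preserved under convex mixtures that share the same invariant measure, integrating the fixed-time identity in $T$ concludes the proof. I expect the main obstacle to be the first stage: one must track the momentum reversals through the Chapman--Kolmogorov sum-integral and carry out the $\psi_K\to-\psi_K$ substitution so that the two applications of the short-time balance recombine into the correct long-time balance, while also verifying that the sum over the intermediate index $K$ and the associated integrals are well behaved.
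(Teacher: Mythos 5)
Your proposal is correct and follows essentially the same route as the paper's proof: extend the phase-space detailed balance of Lemma \ref{lmm:detailbalance} to all $T>0$ via the Chapman--Kolmogorov relation \eqref{eq:ckexpansion}, the evenness of $\pi_\psi$, and the substitution $\psi_K\to-\psi_K$; then marginalize over $(\psi_N,\psi_M)$ with the sign-flip change of variables; then average over the random time $T$. The only (inessential) difference is that you prove additivity at $s+t$ and split $T$ into $k$ pieces below $1$, whereas the paper iterates a doubling step $T\to 2T$; both yield the same extension.
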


\begin{proof}

\noindent\textbf{Step 1}: We first show the relationship \eqref{eq:detailbalance} in Lemma \ref{lmm:detailbalance} satisfies for all $T>0$. To achieve this, we just need to show the relationship satisfies for $2T$ if it satisfies for $T>0$.

In fact, applying the Chapman--Kolmogorov equation in \eqref{eq:ckexpansion}, Lemma \ref{lmm:detailbalance} and the evenness of $\psi$, one has
\[
\begin{aligned}
&\ \bar\pi(\varphi_{N},\psi_{N})\Gamma_{2T}(\varphi_N,\psi_N\to \varphi_M, \psi_M)\\
=&\sum_{K=0}^\infty \int \bar\pi(\varphi_{N},\psi_{N})\Gamma_{T}(\varphi_N,\psi_N\to \varphi_K, \psi_K)\Gamma_{T}(\varphi_K,\psi_K\to \varphi_M, \psi_M)d\varphi_K d\psi_K\\
=&\sum_{K=0}^\infty \int \bar\pi(\varphi_{K},\psi_{K})\Gamma_{T}(\varphi_K,-\psi_K\to \varphi_N, -\psi_N)\Gamma_{T}(\varphi_K,\psi_K\to \varphi_M, \psi_M)d\varphi_K d\psi_K\\
=&\bar\pi(\varphi_{M},-\psi_{M}) \sum_{K=0}^\infty \int  \Gamma_{T}(\varphi_K,-\psi_K\to \varphi_N, -\psi_N)\Gamma_{T}(\varphi_M,-\psi_M\to \varphi_K, -\psi_K) d\varphi_K d\psi_K\\
=&\bar\pi(\varphi_{M},-\psi_{M})\Gamma_{2T}(\varphi_M,-\psi_M\to \varphi_N, -\psi_N)
\end{aligned}
\]

\noindent\textbf{Step 2}:  Now that the detail balance relationship \eqref{eq:detailbalance} holds for any time period $T$, we now integrate $\psi_N,\psi_M$ to obtain the desired result. Using the fact that $\pi(\varphi_{N},\psi_{N})=\pi(\varphi_N)\pi(\psi_N)$, one has
\begin{multline}
\hat\pi(\varphi_N)\Gamma_{T}(\varphi_N\to \varphi_M)=\int \bar\pi(\varphi_{N},\psi_{N})\Gamma_{T}(\varphi_N,\psi_N\to \varphi_M, \psi_M) \d\psi_N \d\psi_M\\
=\int \bar\pi(\varphi_{M},-\psi_{M})\Gamma_{T}(\varphi_M,-\psi_M\to \varphi_N, -\psi_N) \d\psi_N \d\psi_M\\
=\int \bar\pi(\varphi_{M},\psi_{M})\Gamma_{T}(\varphi_M,\psi_M\to \varphi_N, \psi_N) \d\psi_N \d\psi_M
=\hat\pi(\varphi_{M})\Gamma_{T}(\varphi_M\to \varphi_N).
\end{multline}
In the second last step, we have performed a change of variables $\psi_M\to -\psi_M$, $\psi_N\to -\psi_N$.

\vskip 0.1 in

\noindent\textbf{Step 3}:  Consider the randomized simulation time $T\sim\mathrm{UNIF}[T_{\text{min}},T_{\text{max}}]$. The transition kernel of the ideal DHMC algorithm [\ref{DHMC}] can be written as 
\[
\Gamma(\varphi_N,\psi_N\to \varphi_K, \psi_K) = \frac{1}{T_\text{max}-T_\text{min}}\int_{T_\text{min}}^{T_\text{max}}\Gamma_{T}(\varphi_N,\psi_N\to \varphi_K, \psi_K)\d T ,
\]
where $\Gamma_{T}(\varphi_N,\psi_N\to \varphi_K, \psi_K)$ is the transition kernel of the ideal DHMC algorithm with fixed evolution time $T$. Combined with the above result, the proof is complete.
\end{proof}

\section{Discretization of DHMC and discussion}\label{section:discretization}

The continuous dynamics presented in Algorithm \ref{t-H dynamics} is not suitable for practical simulations.
In this section, we discretize of the DHMC dynamics and find the connection of DHMC to existing methods.

\subsection{Discretized DHMC}

\begin{algorithm}[ht]
    \caption{Discretization of DHMC dynamics}\label{discretization}
    \begin{algorithmic}
        \REQUIRE $\textbf{q},\textbf{p},n,p_n$ with $n\in [N, N+1]$ and a time step $\Delta t$. 
         \STATE  $\textbf{q} \gets \textbf{q}+\frac{\Delta t}{2}\nabla K(\textbf{p})$
        \STATE $\textbf{p}\gets \textbf{p}-\frac{\Delta t}{2}\nabla\mathcal{U}(\textbf{q})$  
        \STATE $\delta n = \frac{\Delta t}{m_n}\text{sign}(p_n)$
        \IF{$n+\delta n\geq N+1$ \textbf{or} $n+\delta n< N$}
            \STATE  Propose the $(\textbf{q},\textbf{p})$ as in Algorithm \ref{t-H dynamics}  (If multiple jumps have occured, one just samples the new coordinates consecutively).
           \STATE Compute $\delta\cH$.
            \IF{$K_n(p_n)\geq \delta\cH$}
                \STATE  accept the proposal
                \STATE  $p_n\gets p_n-\text{sign}(p_n)\cdot m_n\cdot \delta\cH$ 
                \STATE  $n\gets n+\delta n$
            \ELSE
                \STATE  reject the proposal
                \STATE  $p_n\gets -p_n$
            \ENDIF
        \ENDIF
        \STATE $\textbf{p}\gets \textbf{p}-\frac{\Delta t}{2}\nabla\mathcal{U}(\textbf{q})$
        \STATE $\textbf{q}\gets \textbf{q}+\frac{\Delta t}{2}\nabla K(\textbf{p})$
       \end{algorithmic}
\end{algorithm}

The DHMC system involves four parameters: \( (\mathbf{q}, \mathbf{p}, n, p_n) \). When the dimension indicator \( n \) does not hit an integer, the system evolves according to the Hamiltonian equations in Eq. \ref{hmc_1}. We will adopt a standard discretization for the Hamiltonian dynamics. Moreover,  since the dynamics of \( (\mathbf{q}, \mathbf{p}) \) and \( (n, p_n) \) are decoupled most of the time when a dimension jump does not occur, we choose to simulate the dynamics of \( (\mathbf{q}, \mathbf{p}) \) and \( (n, p_n) \)  alternatively following the idea of operator splitting.

The dynamics of \((n, p_n)\) is relatively simple: when $n$ does not hit an integer, $p_n$ is unchanged while $n$ keeps increasing or decreasing with a uniform speed. Hence, we only need to check whether jumps have happened or not after a time step has been consumed.
The main computational cost arises from simulating \( (\mathbf{q}, \mathbf{p}) \). We use the Störmer-Verlet scheme to simulate the dynamics of \( (\mathbf{q}, \mathbf{p}) \). To improve numerical accuracy, higher-order symplectic integrators may be employed for simulating the dynamics of \( (\mathbf{q}, \mathbf{p}) \).

When \( n \) hits an integer, we may need to add or remove parameters, compute the energy barrier, and determine whether the dimension jump is successful. As discussed in section \ref{subsec:effectiveHal}, we allow multiple jumps during one single time step. With all these ingredients, a single step of the algorithm is shown in Algorithm \ref{discretization}.

\subsection{DHMC as a special realization of RJMCMC}

The HMC can be considered a specialization of the Metropolis-Hasting algorithm where the proposal mechanism is explicitly given by the Hamiltonian dynamics. Here we clarify that our proposed DHMC for transdimensional sampling is also a specialization of the general reversible jump Markov chain Monte Carlo (RJMCMC) \cite{rjmcmc}.

For convenience, we discuss this in the context of Bayesian model selection. 
Typically, an RJMCMC sampler includes ``between-models moves" and ``within-model moves", enabling it to explore the whole parameter space.
When jumping from a model with lower dimension, say $d_N$, to a higher one, say $d_M$, one samples a random vector $u\sim \pi_{N\to M}(\cdot)$ of dimension $d_{N\to M}$ and then propose the new parameter $q^M = g_{N\to M}(q^N,u)|_1^{d_M}$ according to the one-to-one mapping $g_{N\to M}(q^N,u)$. Reversely, when jumping to a low dimension, we sample a random vector $v\sim \pi_{M\to N}(\cdot)$ of dimension $d_{M\to N}$ and then propose the new parameter $q^N=g_{M\to N}(q^M,v)|_1^{d_N}$, where $g_{M\to N}=g_{N\to M}^{-1}$ is chosen to be the inverse of $g_{N\to M}$ and $d_N+d_{N\to M}=d_M+d_{M\to N}$. As mentioned in \cite{rjmcmc}, the acceptance rate is
\begin{multline}\label{acc}
\mathrm{acc}(q^N,N\to q^M,M)=\\
\min\left\{1,\frac{\pi(q^M,M)\Gamma_n(M\to N)\pi_{M\to N}(v)}{\pi(q^N,N)\Gamma_n(N\to M)\pi_{N\to M}(u)}\bigg|\det\frac{\partial g_{N\to M}(q^N,u)}{\partial(q^N,u)}\bigg|\right\},
\end{multline}
which guarantees the detailed balance condition.

For illustration, we set $m_n=1$ and consider the DHMC starts at $n=N+1/2$ and $T\leq1$ in one step. As described in Algorithm \ref{DHMC} and \ref{t-H dynamics}, when the indicator $n$ moves within an interval $(N,N+1)$, DHMC performs only ``within-model moves". When the indicator $n$ goes across an integer $N$, a ``between-models move" is proposed, and it's accepted or rejected according to the magnitude of $p_n$. 

If $T< 1/2$, only ``within-model moves" are conducted, and DHMC reduces to HMC. If $T\geq 1/2$, the particle moves to the right and to the left with the same probability. 
The DHMC first makes a ``within-model move", then it proposes a ``between-models move", and finally makes another ``within-model move". The ``within-model moves" are done by evolving the Hamiltonian dynamics in the corresponding parameter space. Suppose it jumps from a ``model" with dimension $d_N$ to another with dimension $d_{N+1}$.
Since $p_n\sim \exp(-|p_n|)$ and does not change before $n$ hits an integer, the acceptance probability of this ``between-models move" proposal is
\begin{gather}\label{eq:accdis1}
\mathbb P(K_{n}(p_n)>\delta \cH|p_n\geq 0)=\int_{\{K_{n}(p_n)>\delta \cH\}}\pi_{p_n}(p_n|p_n\geq 0)\d p_n=\min\{1,e^{-\delta\cH}\}.
\end{gather}

Putting DHMC in the framework of the RJMCMC model, the dimension matching random vector for the forward move is $u =(p^N,q_{\mathrm{new}},p_{\mathrm{new}})$ and the mapping is defined as 
$$
g_{N\to N+1}(q^N,u)=F_{T_2}([F_{T_1}(q^N,p^N),q_{\mathrm{new}},p_{\mathrm{new}}])
$$
where $F_t$ is the solution map of the Hamiltonian dynamics for evolution time $t$ and $T_1=1/2,\ T_2=T-1/2$ are the evolution times before and after the "between-models move" happens. Note that $F_{T_1}$ is a flow map on a parameter space of dimension $2d_N$ and $F_{T_2}$ is a flow map on a parameter space of dimension $2d_{N+1}$. By the volume-preserving property of Hamiltonian dynamics, we have 
$$\det\frac{\partial g_{N\to N+1}(q^N,u)}{\partial(q^N,u)}=1.$$
For the reversed move, $v=-p^{N+1}$. Note that $-p^N\sim \mathcal N(0,\frac{m}{\beta}I_N), p^{N+1}\sim \mathcal N(0,\frac{m}{\beta}I_{N+1})$ and $(q_{\mathrm{new}},p_{\mathrm{new}})\sim\pi_{N,N+1}(\cdot,\cdot;q^N,p^N)$. According to equation \eqref{eq:gap1} and equation \eqref{eq:gap2}, the acceptance rate in \eqref{acc} should be 
\[
\min\left\{1,\frac{\pi(q^{N+1},N+1)e^{-\beta K(p^{N+1})}/ Z_p^{d_{N+1}}}{\pi(q^N,N) 
 \pi_{N,N+1}(q_{\mathrm{new}},p_{\mathrm{new}};q^N,p^N) e^{-\beta K(p^N)}/ Z_p^{d_N}}\right\},
\]
which yields the same result as in \eqref{eq:accdis1}. Therefore, the DHMC can be conceptually situated within the framework of RJMCMC.

\section{DHMC for grand canonical ensemble}\label{sec:grandcanonical}

In this section, we focus on the grand canonical ensemble for a physical system and focus on the detailed constructions of the DHMC method for such distributions. 

The grand canonical ensemble characterizes a particle system with a constant volume $V$, maintained at a constant chemical potential $\mu$ and a constant temperature $T$ through contact with a reservoir.  The grand canonical ensemble is sometimes referred to as the constant $\mu VT$ ensemble. The system can exchange both particles and energy with the reservoir.   Due to its constant $\mu VT$ nature, the grand canonical ensemble simulation is most preferred in the study of absorption phenomena.  The probability distribution for the grand canonical ensemble is given by
\begin{gather}\label{eq:gce}
\pi(q^N,N)=\frac{1}{Z_{\beta,\mu}}\frac{1}{N!}e^{\beta\mu N-\beta U(q^N)}\quad N\in\mathbb N_+, q^N\in\Omega^N\subset\mathbb (\R^{d})^N,
\end{gather}
where $Z_{\beta,\mu}=\sum_{N=0}^\infty \int_{\Omega^N} \frac{1}{N!} e^{\beta\mu N-\beta U(q^N)}dq^N$ is the partition function, $N$ is the number of particles, $\mu$ is the chemical potential, $\beta = \frac{1}{k_B T}$, $k_B$ is the Boltzmann constant and $U(q^N)$ is the potential of $N$ particles.

Various classical methods have been developed for the grand canonical ensemble. The traditional method is based on the Metropolis-Hastings algorithm scheme. Proposals for new configurations include random displacements, insertions, and removals of particles. The acceptance rate is then computed according to the different proposals. For more details, readers may refer to Chapter 5 of \cite{10.5555/547952}. Some references \cite{cagin1991grand,lynch1997grand,boinepalli2003grand} introduce a "fraction particle" to integrate the insertion/deletion process with the continuous dynamic. Temperature control is usually done by applying a thermostat, for example, the Langevin thermostat \cite{boinepalli2003grand} and the Nose Hoover thermostat \cite{shroll1999molecular,cagin1991grand,lynch1997grand}.  

Next, we show how our generalized DHMC framework presented above can be applied to sample from the grand canonical ensemble. 
Moreover, the method could be efficient if it is combined with the random batch strategy when we consider the interacting particle systems.

\subsection{DHMC for the Grand Canonical Ensemble}

Following the process in section \ref{subsec:effectiveHal}, we may extend the distribution in \eqref{eq:gce} to the following distribution
in extended space
\begin{gather}
\label{dist:gce}
\bar{\pi}(q,p,n,p_n) = \frac{1}{Z_{\beta,\mu,m_n}}\sum_{N\ge 0}\frac{1}{N!}e^{\beta\tilde\mu N-\beta (U(q^N)+K(p^N)+\frac{|p_n|}{m_n})}\mathbf{1}_{\{N\leq n<N+1\}}
\end{gather}
where
\begin{gather}
\tilde\mu=\mu-\frac{1}{\beta}\log Z_p=\mu-\frac{d}{2\beta}\log\left(\frac{2m\pi}{\beta}\right)
\end{gather}
is the modified chemical potential when we consider the momentum, with $Z_p=(\frac{2m\pi}{\beta})^{d/2}$. Recall that the $\pi$ here is the constant. Moreover, the normalizing constant is given by
\begin{gather}
Z_{\beta,\mu,m_n} = Z_{\beta,\mu}\cdot\frac{2m_n}{\beta}.
\end{gather}

We now accommodate the DHMC method in section \ref{subsec:dhmc} to the grand canonical ensemble. We denote that 
\begin{gather}
    \cU_{N}(q^N)=U(q^N)+\frac{1}{\beta}\log(N!)-\tilde{\mu}N   
\end{gather}
and \eqref{dist:gce} becomes
\begin{gather}
    \bar{\pi}(q,p,n,p_n) = \frac{1}{Z_{\beta,\mu,m_n}}\sum_{N\ge 0}e^{-\beta (\cU_N(q^N)+K(p^N)+\frac{|p_n|}{m_n})}\mathbf{1}_{\{N\leq n<N+1\}}
\end{gather}
Since $q^N=(q_1, \cdots, q_N)\in (\R^{d})^N$ and $p^N=(p_1,\cdots, p_N)\in (\R^d)^N$, when $N$ is increased by $1$, we just need to add
one particle. For our setting, it would be convenient to choose
\begin{gather}
q_{N+1}\sim \mathrm{UNIF}(\Omega),\quad p_{N+1}\sim \mathcal{N}\left(0,\frac{m}{\beta}I_d\right).
\end{gather}
Hence, the density for the new particle  is simply 
\[
\pi_{N,N+1}=\frac{1}{VZ_p}\mathbf{1}_{\Omega}(q_{N+1})\exp\left(-\beta\frac{|p_{N+1}|^2}{2m}\right).
\]
where $V=|\Omega|$. With this choice, we find easily that
 \begin{multline}
    \label{eq:gcadd}
    \delta \cH = \cU_N(q^{N+1})+K(q^{N+1})-\cU_N(q^N)-K(q^{N}) + \frac{1}{\beta}\log(\pi_{N,N+1}(q_{N+1},p_{N+1};q^N,p^N))\\
    =U(q^{N+1})-U(q^N) +\frac{1}{\beta}\log (N+1)-\frac{1}{\beta}\log(V) -\mu.
 \end{multline}
Note that $\mu$ here is the original chemical potential in \eqref{eq:gce}.

When we delete one particle, the free energy barrier is similarly given by
\begin{equation}\label{eq:gcdelete}
\delta \cH = U(q^{N-1})-U(q^{N})-\frac{1}{\beta}\log N+\frac{1}{\beta}\log(V)+\mu.
\end{equation}

Note that according to the method in section \ref{subsec:dhmc}, we may delete
the last particle in the list. This is not very beneficial because the particles play symmetric roles.
Hence, we will instead choose a random particle from $i=1,\cdots, N$ to delete. That means 
\[
q^{N-1}=(q_1,\cdots, q_{\xi-1}, q_{\xi+1},\cdots, q_N ),
\]
with $\xi$ uniformly sampled from $1,\cdots, N$.
This extra random process will not bring in a new correction in the energy barrier.
In fact, the desired grand canonical ensemble is symmetric in the particles with respect to any permutation and we will then consider statistics that are invariant under permutation as well. Hence, we can understand the samples the same if they are different by a permutation.  With this understanding, our DHMC method can be imagined as the algorithm in the state space by modulo the permutation symmetry.


To summarize, for each iteration of the DHMC, the momenta are resampled by 
\[
\bm{p}^{N}\sim\mathcal{N}(0,I_{dN}),\quad p_n \sim  \frac{\beta}{2m_n}e^{-\beta|\frac{p_n}{m_n}|}.
\]
During the simulation, When a particle is added, we do the sampling $q_{N+1}\sim \mathrm{UNIF}(\Omega)$ ,$p_{N+1}\sim \mathcal{N}(0,\frac{m}{\beta}I_d)$. When a particle is deleted, we choose randomly one to delete. When a particle is added or deleted, we compute $\delta\cH$ given by \eqref{eq:gcadd} or \eqref{eq:gcdelete}.
Then, Algorithm \ref{discretization} is easily modified for grand canonical ensemble.

\begin{remark}
    While the previous discussion considered \(\Omega\) to be bounded, we now turn to the full space case where \(\Omega = \mathbb{R}^d\). We just need to adjust the distribution of the new particle \(\pi_{N,N+1}\), while \(\delta \cH\) is still computed using Equation \eqref{eq:gap1}. Specifically, we may choose  
$$
q_{N+1} \sim \pi_\mathrm{new}(q)= \frac{1}{Z_\beta}e^{-\beta V(q)}, \quad p_{N+1} \sim \mathcal{N} \left(0, \frac{m}{\beta} I_d \right).
$$
where $Z_\beta=\int e^{-\beta V(q)}\mathrm{d} q$. With this choice, the energy barrier for adding a particle is given by  
$$
\delta \cH = U(q^{N+1}) - U(q^N) + \frac{1}{\beta} \log (N+1) - \frac{1}{\beta} \log(Z_\beta) - V(q_{N+1}) - \mu.
$$
Similarly, the energy barrier for deleting a particle is given by  
$$
\delta \cH = U(q^{N-1}) - U(q^N) - \frac{1}{\beta} \log N + \frac{1}{\beta} \log(Z_\beta) + V(q_N) + \mu.
$$
\end{remark}

\subsection{Combining DHMC with Random Batch Method}

In many cases, we need to sample from a distribution $\pi(q^N, N) = \frac{1}{Z} \frac{1}{N!} e^{-U(q^N)}$, where the potential function is of the form 
\[
U(q^N) = \sum_{i=1}^N V(q_i)+\sum_{1\leq i<j\leq N} \phi(q_i-q_j).
\]
Here, $V(q_i)$ is the confining potential and $\phi(q_i-q_j)=\phi(q_j-q_i)$ is the interaction potential between particle $q_i$ and $q_j$. When simulating Hamiltonian dynamics as described in section \ref{subsec:dhmc}, the gradient of the potential $\mathcal{U}$ needs to be calculated and updated at each time step, and the computational cost is $O(N^2)$ due to the interaction part of the potential, which becomes computationally expensive for large $N$.
$$
\nabla_{q_i}U = \nabla V(q_i)+\sum_{j\neq i}\nabla  \phi(q_i-q_j),\quad i=1,2,\ldots,N 
$$
To mitigate this computational burden, we employ the well-known random batch method (RBM). Originally introduced in \cite{Jin_2020} for simulating the interacting particles, RBM is conceptually simple: at each time step, the particles are randomly divided into smaller groups, and interactions are computed only within each group (batch). This approach provides an unbiased approximation of the full gradient while significantly reducing the computational cost to $O(N)$. RBM has also been extended to second-order Langevin dynamics, as proposed in \cite{RBM_2ndorder}. As shown in \cite{Jin_2020, RBM_2ndorder}, under mild conditions, RBM achieves uniform-in-time convergence for both first-order and second-order Langevin dynamics, with a convergence rate that is independent of the number of particles. This is due to time averaging, which can be interpreted as a law of large numbers in time.  
RBM has also been applied to sampling problems, as discussed in \cite{li2023splittinghamiltonianmontecarlo, li2020random}, where it is used to speed up the Hamiltonian Monte Carlo and Langevin Monte Carlo for particle systems.  

When the interaction kernel \( \nabla \phi \) exhibits singularity, the vanilla RBM may not perform well. To address this issue, kernel splitting was proposed in \cite{li2023splittinghamiltonianmontecarlo, li2020random, RBM_2ndorder} to improve RBM’s effectiveness. The key idea is to decompose the interaction kernel into two parts:  
\[
\nabla \phi(z) = K_1(z) + K_2(z),
\]  
where \( K_1(z) \) contains the singularity but has short-range interactions, while \( K_2(z) \) is smooth and bounded, potentially with long-range interactions. In this paper, we adopt the approach in \cite{RBM_2ndorder}, and apply RBM only to the smooth component \( K_2 \), while computing the full interaction of \( K_1 \) directly, as detailed in Algorithm \ref{RBM_singular}. Notably, if \( \nabla \phi(z) \) is non-singular, we can simply set \( K_1(z) = 0 \) and \( K_2(z) = \nabla \phi(z) \).  The random batch approximation of $\nabla U$ will be used in Algorithm \ref{discretization} (the same approximation is used for the two half-steps during a single loop) for the Lennard-Jones example in section \ref{subsec:LJ}.

\begin{algorithm}[ht]
    \caption{RBM}\label{RBM_singular}
    \begin{algorithmic}
        \REQUIRE $q^{N}$
        \STATE Divide $\{1,2,\cdots,N\}$ into n batches randomly.
        \FOR{each batch $C_{\ell}$}
            \FOR{$i$ in $C_{\ell}$}
                \STATE $$\nabla_{q_i}U \leftarrow \nabla V(q_i)+\sum_{j\neq i}K_1(q_j-q_i) + \frac{N}{|C_{\ell}|}\sum_{j \in C_{\ell}\ \text{and}\ j\neq  i} K_2(q_j-q_i)$$
            \ENDFOR
        \ENDFOR
    \end{algorithmic}
\end{algorithm}

When sampling from grand canonical ensemble for interacting particles using Algorithm \ref{discretization} combined with the random batch approximation in Algorithm \ref{RBM_singular}, the computational complexity is then reduced from $O(N^2)$ to $O(N)$. In fact, for each iteration, each Random Batch gradient computation takes $O(N)$. Each time we add or delete a particle, the computation of energy gap takes $O(N)$, so the complexity of RB-DHMC is $O(N)$ per iteration.

\section{Numerical Examples}\label{sec:numerical}
In this section, we present three numerical examples to validate the correctness of our theory and demonstrate the efficiency of our algorithm. All the systems in this section are placed in a fixed periodic box.
The first example is the 1-dimensional free gas model and the second example is a 1-dimensional artificial system of interacting particles with an artificial cosine interaction function, which are used to test the correctness of the method. 
The third example is the Lennard-Jones fluid using which our DHMC algorithm combined with the random batch strategy is tested, showcasing both the accuracy and the computational efficiency of this approach when compared to the standard Metropolis-Hastings (MH) algorithm.

\subsection{A one dimensional free gas model}\label{subsec:free_gas}
In this one dimensional free gas model, the particles do not have interaction.  Thus, the potential $U(q^N)\equiv 0$. We assume that the whole system is placed in a periodic box with length $L$. The distribution of the number of particles in the system is as follows.

$$
\pi(q^N) = \frac{e^{-Le^{\beta\mu}}}{N!}e^{\beta \mu N}, \quad q^N\in [0,L]^N,N\in \mathbb N
$$
hence the probability mass function of the number of particles $N$ in the system is
\begin{equation*}
    \pi_N(k)=\frac{e^{-Le^{\beta\mu}}}{k!}e^{\beta \mu k}L^k\quad k\in\mathbb N
\end{equation*}
so the number of particles in the system follows the Poisson distribution with parameter $\lambda=Le^{\beta\mu}$

The  total variation distance between the theoretical particle number distribution and the sample particle number distribution is defined as follows, 
\[
TV(\pi_N,\tilde{\pi}_N):=\sum_k|\pi_N(k)-\tilde\pi_N(k)|,
\]
where $\pi_N(k)$ and $\tilde\pi_N(k)$ are the theoretical particle number distribution and sample particle number distribution, respectively.

\begin{figure}
    \centering
    \includegraphics[width=0.9\linewidth]{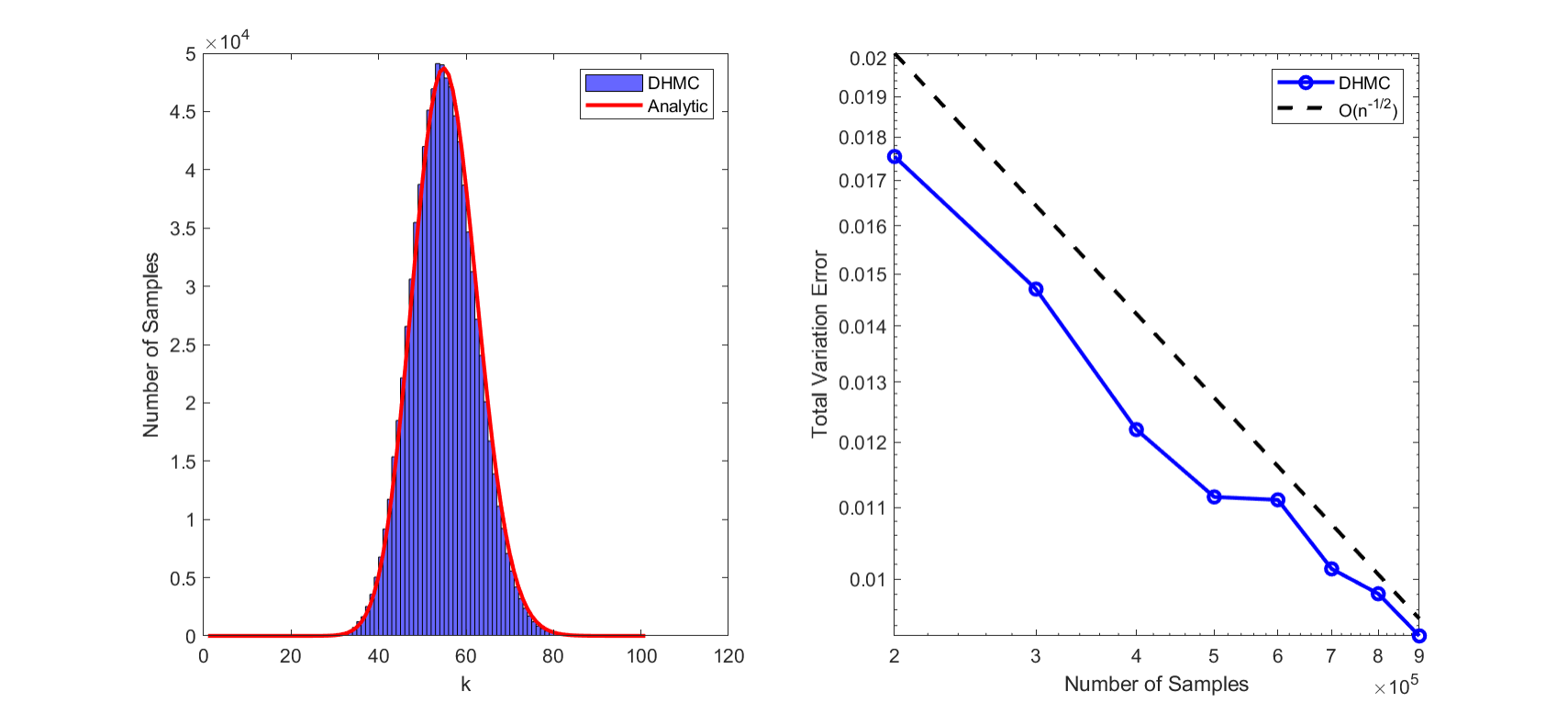}
    \caption{Left panel: Empirical distribution of particle numbers for DHMC samplers with a sample size of $9\times10^5$; Right panel: Error versus different sample sizes in log scale, with a reference line having a slope of -1/2}
    \label{fig:free_gas}
\end{figure}

In this numerical example, the step size of the integrator is $\epsilon\sim \mathrm{UNIF}[0.05,0.1]$ and the number of numerical integrations for every new sample is $N_\text{iter}=5$. We use a periodic boundary.

When we add a new particle into the system, the position is uniformly sampled in $[0,L]$ and the momentum is sampled from the Gibbs distribution with respect to the kinetic energy $K(p)$ at temperature $\beta^{-1}$. In this case, the dimension-jumping free energy gap is 
\[
\delta\cH(q^N,p^N\to q^M,p^M) = \frac{1}{\beta}(\log(M!)-\log(N!)) + (M-N)(\frac{\log(L)}{\beta} - \mu ).
\]

As shown in Figure \ref{fig:free_gas}, the particle number distribution from the DHMC algorithm matches the theoretical distribution.  Moreover, the convergence rate with respect to the sample size decays as $O(N^{-1/2})$, agreeing with the standard Monte Carlo rate, showing that our method can correctly sample from the grand canonical ensemble if there is no interacting potential.




\subsection{A system with an artificial cosine potential}

In this example, we consider the artificial 1-dimensional model with cos-interactive-potential.
\[
U(q^N) = \sum_{i<j}\text{cos}(\frac{2\pi}{L}(q_i-q_j)).
\]
It is hard to explicitly compute the distribution of the number of particles. Thus we compare the result of DHMC with the Metropolis-Hastings algorithm (in particular RJMCMC).


For our extended DHMC method, we adopt the same method of adding a new particle as in the previous example, i.e. $q_{\mathrm{new}}\sim \mathrm{UNIF}([0,L])$ and $p_{\mathrm{new}}\sim\mathcal{N}(0,\frac{m}{\beta})$. So the energy barrier of dimension jump is as follows
$$
\delta \mathcal{H}(q^N,p^N\to q^M,p^M) = \frac{1}{\beta}(\log(M!)-\log(N!)) + (M-N)(\frac{\log(L)}{\beta} - \mu ) + U(q^M)-U(q^N).
$$
In the simulation, we set \( L = 10 \), \( \beta = 1 \), \( m = 1 \), \( m_n = 1 \), and \( \mu = -0.5 \). Similarly, we adopt the periodic boundary condition. For the DHMC simulation, in each iteration, the number of numerical integration is set to be $N_{\text{int}}=5$, and the step size is set to be  $\epsilon\sim\mathrm{UNIF}([0.05, 0.1])$. To make sure we are sampling from the equilibrium, we take the burn-in steps to be $10^6$.



To demonstrate the convergence of the weak error, we consider two metrics: the total variation distance of the marginal distribution with respect to the particle number (defined in the same way as in Section~\ref{subsec:free_gas}) and the weak error on a test function that depends on both $q^N$ and $N$. The test function $\phi(q^N, N)$ is defined as
\[
\phi(q^N, N) = \sum_{i<j} \cos^2\left(\frac{2\pi N}{L}(q_i - q_j)\right).
\]

The weak error for this test function is defined by
\[
\text{err}_{\text{weak}} = \frac{1}{N_{\text{sample}}} \sum_{i=1}^{N_{\text{sample}}} \left| \phi(\mathbf{q}^{(i)}_{\text{DHMC}}, N^{(i)}_{\text{DHMC}}) - \overline{\phi(\mathbf{q}_{\text{MH}}, N_{\text{MH}})} \right|,
\]
where $(\mathbf{q}^{(i)}_{\text{DHMC}}, N^{(i)}_{\text{DHMC}})$ denotes the $i$-th sample from the DHMC sampler. The reference value is given by the sample average from the Metropolis-Hastings (MH) sampler:
\[
\overline{\phi(\mathbf{q}_{\text{MH}}, N_{\text{MH}})} = \frac{1}{N'_{\text{sample}}} \sum_{j=1}^{N'_{\text{sample}}} \phi(\mathbf{q}^{(j)}_{\text{MH}}, N^{(j)}_{\text{MH}}).
\]
Similar to Chapter 6 of \cite{10.5555/547952}, the proposal step of the MH sampler is generated by (1)
With probability $0.4$, add a new particle$  q_\mathrm{new} \sim \mathrm{UNIF}([0, L]) $;
(2) With probability $0.4$, randomly select a particle and then remove it; (3) With probability $0.2$, randomly select  $\lfloor N/5 \rfloor$  particles, and assign each selected particle a new independent position drawn from  $\mathrm{UNIF}([0, L])$. To ensure the MH samples are drawn from equilibrium, we use a burn-in of $10^6$ steps and a sample size of $N'_{\text{sample}} = 7.6 \times 10^8$, which provides a highly accurate estimate of the true expectation of the test function $\phi$.



\begin{figure}[H]
    \centering
    \includegraphics[width=\textwidth]{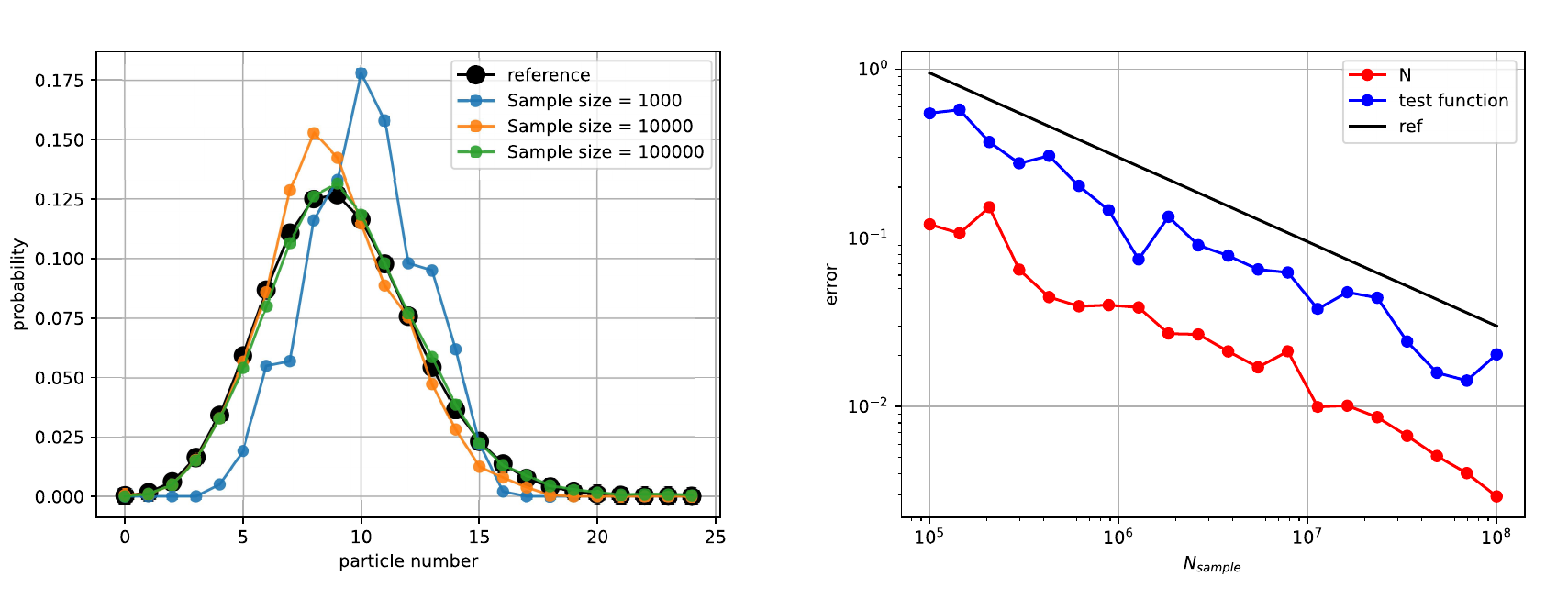}
    \caption{Left panel: Empirical distributions of particle numbers obtained using DHMC samplers with varying sample sizes. The reference distribution is given by the empirical distribution of the MH sampler with a sample size of $10^8$ at equilibrium.  Right panel: Error versus different sample sizes, with a reference line having a slope of -1/2}
    \label{cos}
\end{figure}


As shown in Figure \ref{cos}, the empirical distribution of particle number produced by DHMC matches the reference, produced from MH using fine steps and large sample size. As shown in the right panel, the weak error converge in half-order, verifying the correctness of our method. Note that for the curves in the right panel of Figure \ref{cos}, we repeated the numerical experiments $10$ times and reported the average error to reduce the variance of the plot. This averaging mitigates fluctuations due to randomness in individual runs.

\subsection{The Lennard-Jones fluid}\label{subsec:LJ}
In the final example, we apply our algorithm to the grand canonical Monte Carlo simulation of the Lennard-Jones fluid. We sample from the grand canonical distribution of a three-dimensional interacting particle system with Lennard-Jones potential 
\begin{equation*}
    U(q^N)=\sum_{i<j}\phi(r_{ij}), \quad \phi(r_{ij})=4\bigg( \frac{1}{|r_{ij}|^{12}}-\frac{1}{|r_{ij}|^6}\bigg), \quad r_{ij}=(q_i-q_j),
\end{equation*}
where $q_i$'s are the location of particles and $|r_{ij}|$'s are the distance between each two different particles. The simulation is conducted in periodic boxes, which means that a particle interacts with both other particles and their images. Hence the interacting potential now reads
\begin{equation*}
    U^*(q^N)=\sum_{n}^*\sum_{i<j}\phi(r_{ij}+nL),
\end{equation*}
where the notation $\sum_{n}^*$ means that $i=j$ is excluded from the latter summation when $n=0$. 

Due to the singularity and rapid decay of Lennard-Jones potential, we introduce cutoff distance $r_c$ and a splitting strategy for the random batch method. This splitting strategy is exactly the same as it is in \cite{li2023splittinghamiltonianmontecarlo}.
\begin{equation*}
    \phi(r)=\phi_1(r)+\phi_2(r),
\end{equation*}
where
\begin{equation*}
    \phi_1(r)=
    \begin{cases}
        -2^{-\frac{1}{6}}r,&0<r<2^{\frac{1}{6}},\\
        4(r^{-12}-r^{-6}),&r\geq2^{\frac{1}{6}},\\
    \end{cases}
\end{equation*}
and
\begin{equation*}
        \phi_2(r)=
    \begin{cases}
        4(r^{-12}-r^{-6})+2^{-\frac{1}{6}}r,&0<r<2^{\frac{1}{6}},\\
        0,&r\geq2^{\frac{1}{6}}.\\
    \end{cases}
\end{equation*}

With cutoff distance $r_c$, the pressure is approximated by
$$
P=\frac{\rho}{\beta}+\frac{8}{V}\sum_{i=1}^{N}\sum_{\substack{j:j>i,\\r^*_{ij}<r_c}}\bigg[2\big(\frac{1}{r^*_{ij}}\big)^{12}-\big(\frac{1}{r^*_{ij}}\big)^{6}\bigg]+\frac{16}{3}\pi\rho^2\bigg[\frac{2}{3}\big(\frac{1}{r_c}\big)^9-\big(\frac{1}{r_c}\big)^3\bigg],
$$
where $\rho=\frac{N}{L^3}$ is the number density of the system.

\begin{figure}[H]
    \centering
    \includegraphics[width=\textwidth]{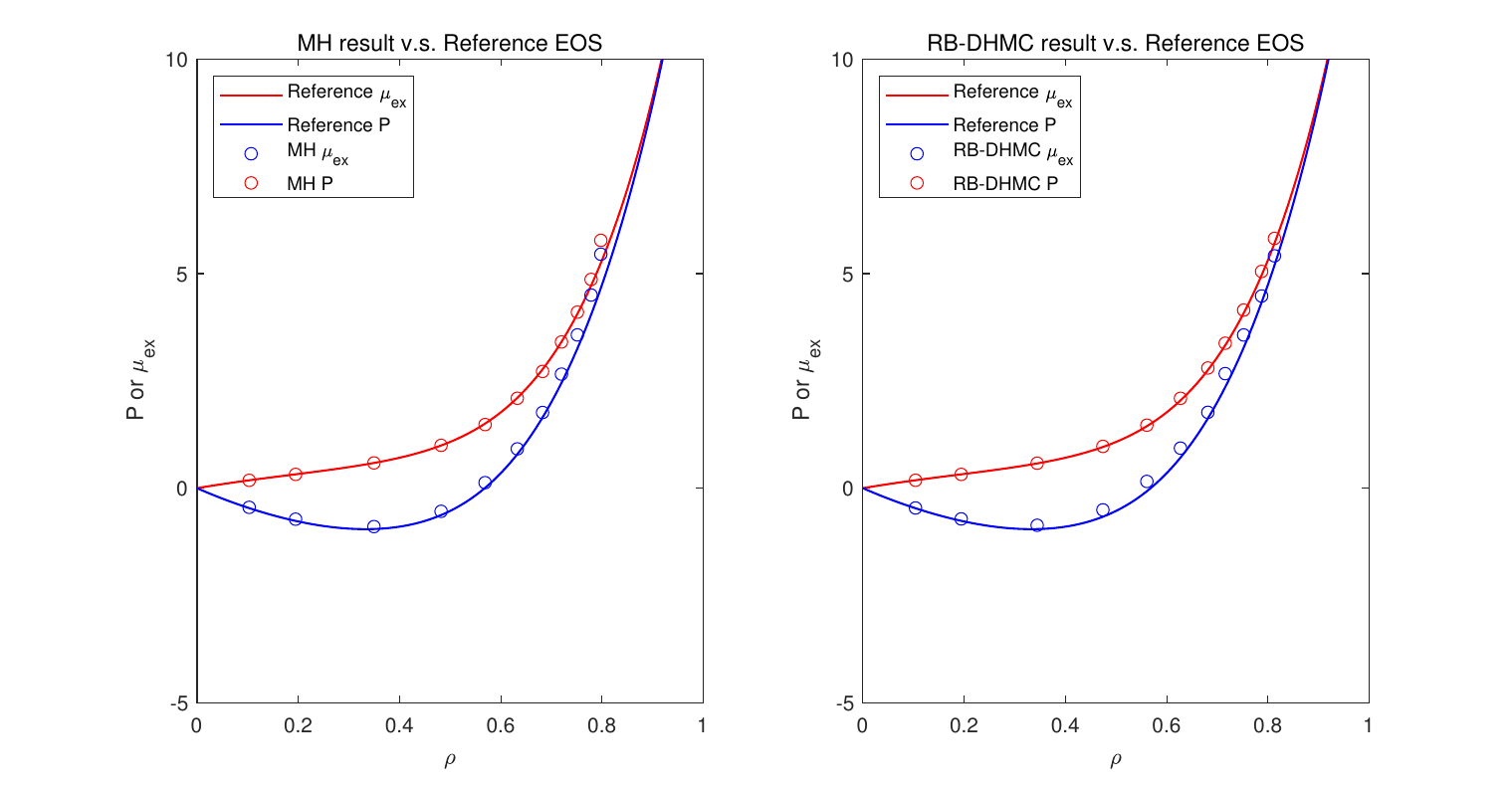}
    \caption{The line is the equation of state proposed by Johnson\cite{johnson1993lennard} and the marks are simulation results given by Metropolis-Hastings algorithm and our RB-DHMC.}
    \label{eos_lj}
\end{figure}

In this example, we adopt the kernel splitting and random batch method as in Algorithm \ref{RBM_singular}. We always take the batch size as $2$.
We impose $\mu\in\{-5,-4,...4,5\}$ and set $L=12.6$, $T=0.5$. The step size of a single step of the integrator $\Delta t$ is set to be $\Delta t\sim \mathrm{UNIF}[0.001,0.003]$ due to the singularity of the Lenord-Jones potential and the system is evolved for $\Delta t_n=5\Delta t$ to propose a new state.  The first $2\times 10^5$ iteration is discarded as the burn-in phase. The pressure is evaluated in the following way: we run the iteration for $M$ times and then compute the pressure for the last configuration, here the pressure is evaluated every $100$ iteration. As is shown in Figure \ref{eos_lj}, our simulation presents reasonable agreement with the equation of state proposed by \cite{johnson1993lennard}. Here, the red and blue lines are the fitted curves in \cite{johnson1993lennard}, while the red and blue marks (circles) are results by numerical simulations.

\begin{figure}[H]
    \centering
    \includegraphics[width=0.6\textwidth]{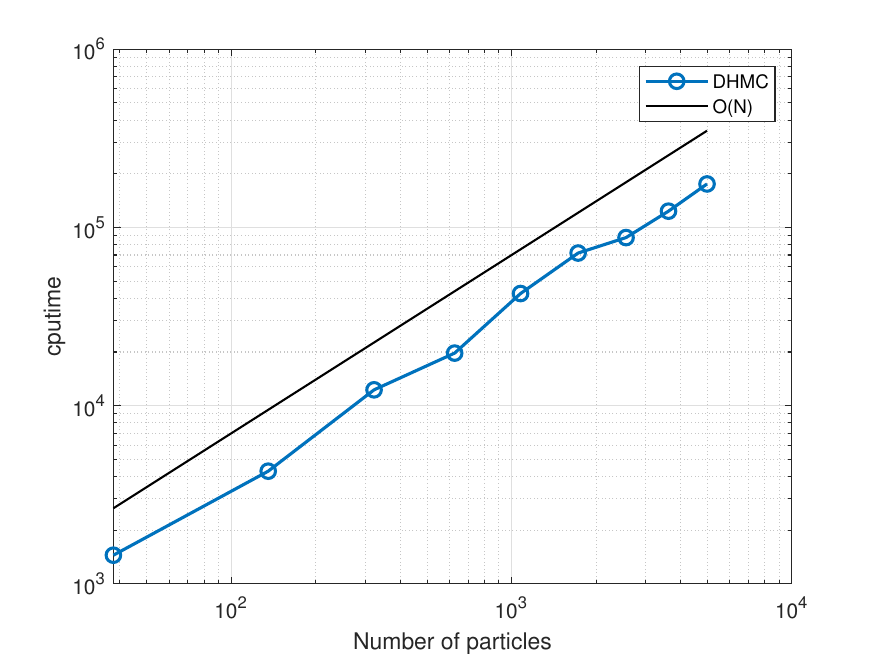} 
    \caption{The CPU time of RB-DHMC grows linearly with respect to the number of particles.}
    \label{time_lj}
\end{figure}
Note that our DHMC method combined with random batch method takes $O(N)$ per loop. Figure \ref{time_lj}  demonstrates the computational efficiency of RB-DHMC.
Clearly, the computational time scales linearly with the particle number, agreeing with the analysis above.

The MH method also takes $O(N)$ per time step as it only moves one particle. The complexity of the two methods are the same but our method consumes more time per time step as it evolves the whole particle system. Since our method moves $N$ particles, it should have smaller correlation.  As shown in Figure \ref{p_sample}, the pressure computed by DHMC clearly has less correlation and thus better quality.

\begin{figure}[H]
    \centering
    \includegraphics[width=0.8\textwidth]{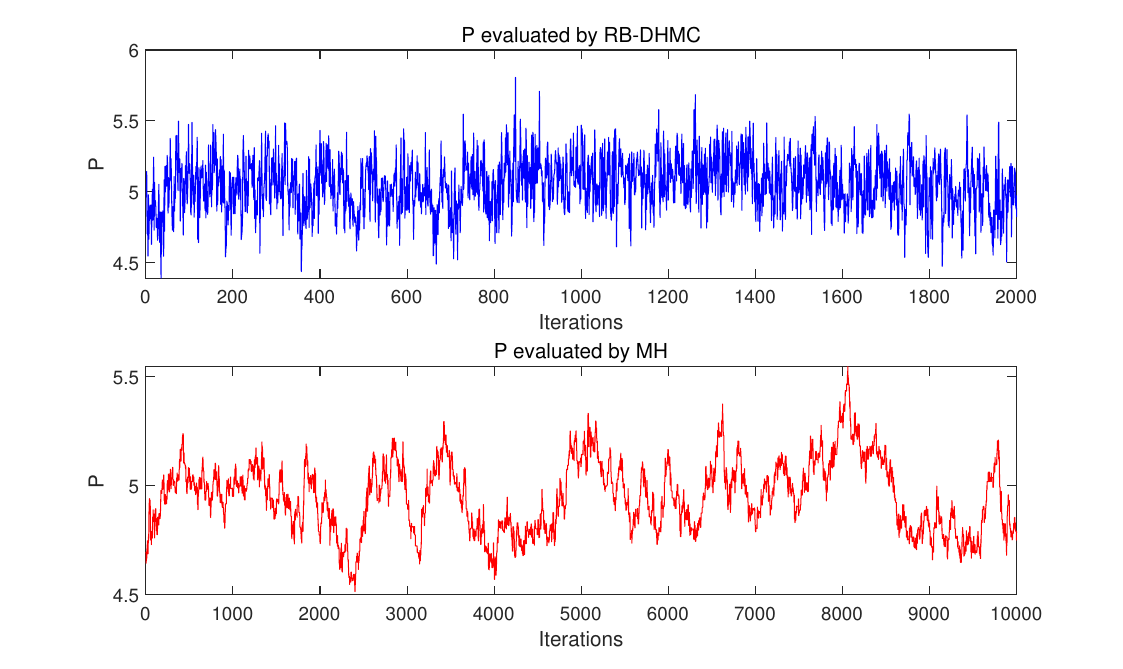}
    \caption{Upper: The pressure evaluated by RB-DHMC. Lower: The pressure evaluated by MH.}
    \label{p_sample}
\end{figure}

\begin{figure}[H]
    \centering
    \includegraphics[width=\textwidth]{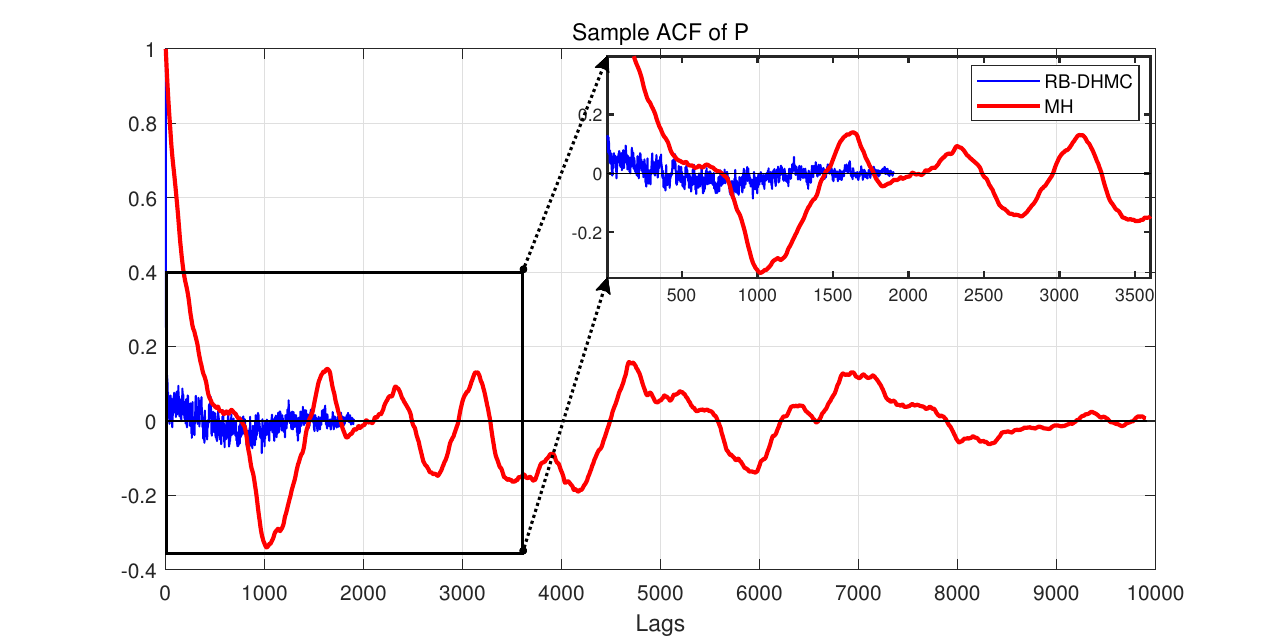}
    \caption{The sample ACF of P.}
    \label{acf_lj}
\end{figure}

Next, we demonstrate that our method can generate samples with much less correlation more quantitatively. The correlation is evaluated by the sample autocorrelation function(ACF). We compare our algorithm with the Metropolis-Hastings algorithm in terms of pressure evaluation. As shown in Figure \ref{acf_lj}, the ACF of RB-DHMC oscillates around $0$ with a significantly smaller amplitude, which implies better mixing and faster convergence.

\section*{Acknowledgement}

The work of L. Li was partially supported by the National Key R\&D Program of China, Project Number 2021YFA1002800,  NSFC 12371400 and 12031013, and  Shanghai Municipal Science and Technology Major Project 2021SHZDZX0102.

\bibliographystyle{plain} 
\bibliography{dhmc} 

\appendix

\section{Proof of Lemma \ref{lmm:detailbalance}}\label{app:lmm1proof}

We need to show the following detailed balance condition 
\begin{multline}
\label{detail}
\bar\pi(\bfq^N,\bfp^N,n,p_n)\Gamma_T(\bfq^N,\bfp^N,n,p_n\to \tilde \bfq^M,\tilde \bfp^M,\tilde n,\tilde p_n)\\
    =\bar\pi(\tilde \bfq^M,-\tilde \bfp^M,\tilde n,-\tilde p_n)\Gamma_T(\tilde \bfq^M,-\tilde \bfp^M,\tilde n,-\tilde p_n\to \bfq^N,-\bfp^N,n,-p_n)
\end{multline}
Recall that we have assumed $m_n = 1$ and $T<1$. 

First, let us recall some well-known properties  of the Hamiltonian dynamics. Let $F_T$ denote the solution map of the following Hamiltonian dynamics for evolution time $T$
\begin{equation}
\label{hamilton}
    \begin{dcases}
        \frac{d\mathbf{q}^N}{dt} = \nabla_{\mathbf{p}^N} \mathcal{H}_N, \\
        \frac{d\mathbf{p}^N}{dt} = -\nabla_{\mathbf{q}^N} \mathcal{H}_N,
    \end{dcases}
\end{equation}
where $\mathcal{H}_N = -\frac{1}{\beta} \log \pi(\mathbf{q}^N, \mathbf{p}^N)$.
Then
\begin{itemize}
    \item[(1)] $F_T$ is a diffeomorphism with the determinant of the Jacobian of $F_T$ being $1$, i.e.,  $\det J{F_T} \equiv 1$.
    \item[(2)] $F_T$ preserves the value of the Hamiltonian $\mathcal{H}_N$ along the trajectory exactly.
    \item[(3)] $F_T$ is reversible, i.e., if $F_T(\mathbf{q}^N, \mathbf{p}^N) = (\tilde{\mathbf{q}}^N, \tilde{\mathbf{p}}^N)$, then $F_T(\tilde{\mathbf{q}}^N, -\tilde{\mathbf{p}}^N) = (\mathbf{q}^N, -\mathbf{p}^N)$.
\end{itemize}

\begin{figure}[H]
    \centering
    \includegraphics[width=0.8\textwidth]{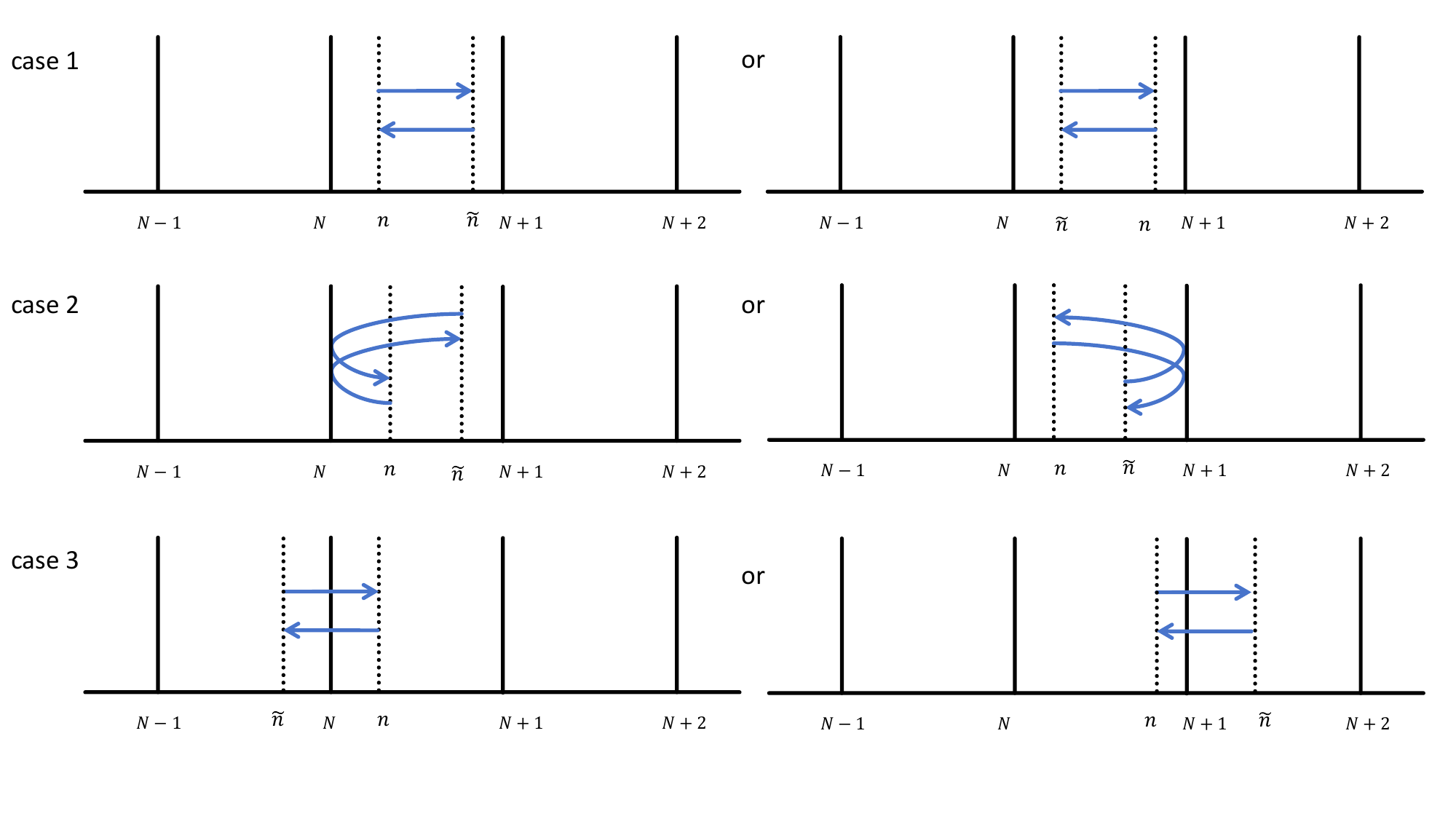} 
    \caption{Among all possible cases of the motion of $n$, only those shown in the figure need to be considered when $\Delta n <1$}
    \label{n_cases}
\end{figure}

We can now turn to the study of $\Gamma_T(\bfq^N,\bfp^N,n,p_n\to \tilde \bfq^M,\tilde \bfp^M,\tilde n,\tilde p_n)$. Note that for given $(\bfq^N, \bfp^N, n, p_n)$, the evolution is deterministic unless $n$ hits an integer.
Similarly, for a given $(\tilde \bfq^M,\tilde \bfp^M,\tilde n,\tilde p_n)$, if we evolve it backward in time, the dynamics is also deterministic until it hits one integer. Hence, one could easily determine whether they could be connected by our DHMC algorithm. Since  $\Delta n = T<1$, there are three cases as shown in Figure \ref{n_cases}.

\vskip 0.1 in

\noindent
\textbf{Case 1}: $p_n\geq0$ and $T<N+1-n$ or $p_n<0$ and $T< n-N$ 

The indicator $n$ does not come across any integer and thus the nontrivial contribution comes from $M=N$. Hence, the transition kernel of DHMC degenerates into the transition kernel of HMC, and the detailed balance condition (\ref{detail}) holds.

\vskip 0.1 in

\noindent
\textbf{Case 2}: $p_n<0$ and $T\ge n-N$ or $p_n\ge 0$ and $T\geq N+1-n$, but $p_n$ is not large enough to cross the boundary. In this case, $M=N$. (Note that when $p_n> 0$, the transition kernel would include $M=N$ (discussed here) and $M=N+1$ (discussed in case 3). Similar situation happens for $p_n<0$.)

When $p_n\ge 0$, the indicator $n$ increases and then hits an integer. Then, insertion is proposed. The probability that $K(p_n)<\delta \cH$ is given by
\[
P=\iint_{\Omega^{N-1,N}\times \R^{d_{N}-d_{N-1}}}\mathbf{1}_{\{K(p_n)<\delta \cH\}}\pi_{N,N+1}(\bfq_{new},\bfp_{new};F_{T_1}(\bfq,\bfp))d \bfq_{new} d\bfp_{new}
\]
where $T_1=N-n$. Here, $\Omega^{N-1,N}$ is the domain for $\bfq_{new}$ with the given $F_{T_1}(\bfq,\bfp)$.
In this case, the indicator $n$ is bounced back by negating $p_n$. 
When $p_n<0$, whether $K(p_n)<\delta \cH$ or not is deterministic. Hence, $P=0$ or $P=1$. 

Note that as long as the proposal dimension change is rejected, the dynamics of $(\textbf{q},\textbf{p})$ later would not be altered so we only care about the probability of rejection in this case.
Then, we have
\begin{multline*}
    \Gamma_T(\bfq^N,\bfp^N,n,p_n\to \tilde \bfq^{N},\tilde \bfp^{N},\tilde n,\tilde p_n)=\\
    P\delta ((\tilde \bfq^N,\tilde \bfp^N)-F_{T}(\bfq^N,\bfp^N))\delta(n+\tilde n-2N-\Delta n)\delta(\tilde p_n+p_n).
\end{multline*}
If we reverse the dynamics, starting with $(\tilde \bfq^N, -\tilde \bfp^N, \tilde{n}, -\tilde{p}_n)$ that makes the transition kernel above nonzero, the system will hit the same integer as the Hamiltonian dynamics is reversible. Then, the probability of rejection $P$ will be the same as by the conservation of Hamiltonian and the evenness in $p$. By the volume-preserving, the detailed balance condition (\ref{detail}) holds similar as in the usual HMC.

\vskip 0.1 in

\noindent
\textbf{Case 3}: $p_n<0$ and $T\ge n-N$ or $p_n\ge 0$ and $T\geq N+1-n$, and the dimension change has been accepted. The two figures shown in Case 3 will be identical, so we only consider the left one so that $p_n<0$ and $T\ge n-N$ and $M=N-1$. 

For convenience, we define $T_1=n-N$ and $T_2=T-n+N$. Denote $(\tilde{\bfq}_1^{N-1}, \tilde{\bfp}_1^{N-1})
=F_{-T_2}(\tilde \bfq^{N-1},\tilde \bfp^{N-1})$
and $(\bfq_1^N, \bfp_1^N):=F_{T_1}(\bfq^N, \bfp^N)$, the free energy gap is given by
\[
    \delta \cH^-=\Delta \cH^- -\frac{1}{\beta}\log(\pi_{N-1,N}((\bfq_1^N)|_{d_{N-1}+1}^{d_{N}}, (\bfp)_1^N|_{d_{N-1}+1}^{d_{N}}; \tilde{\bfq}_1^{N-1}, \tilde{\bfp}_1^{N-1})).
\]
provided that 
\[
(\tilde{\bfq}_1^{N-1}, \tilde{\bfp}_1^{N-1})
=(\bfq_1^N, \bfp_1^N)|_1^{d_{N-1}},
\]
or equivalently
\[
(\tilde \bfq^{N-1},\tilde \bfp^{N-1})
=F_{T_2}((\bfq_1^N, \bfp_1^N)|_1^{d_{N-1}})
\]
For this case to happen, the condition $K(p_n)\geq\delta\cH^-$ must hold. 
Hence, we have
\begin{multline*}
    \Gamma_T(\bfq^N,\bfp^N,n,p_n\to \tilde{\bfq}^{N-1}, \tilde{\bfp}^{N-1},\tilde n,\tilde p_n)\\
    =\mathbf{1}_{\{K(p_n)\geq\delta\cH^-\}}\delta ((\tilde \bfq^{N-1},\tilde \bfp^{N-1})
-F_{T_2}((\bfq_1^N, \bfp_1^N)|_1^{d_{N-1}}))\delta(n-\tilde n+\Delta n)\delta(\tilde p_n-p_n-\delta\cH^-)
\end{multline*}

For the reverse dynamics, we similarly denote
$(\tilde{\bfq}_1^{N-1}, -\tilde{\bfp}_1^{N-1})
=F_{T_2}(\tilde{\bfq}^{N-1}, -\tilde{\bfp}^{N-1})$. 
By the reversibility of the Hamilton dynamics, 
$(\tilde{\bfq}_1^{N-1}, -\tilde{\bfp}_1^{N-1})$ is the same as above. Similarly, we denote
$(\bfq_1^N, -\bfp_1^N):=F_{-T_1}(\bfq^N, -\bfp^N)$. 
Then, one has
\[
    \delta\cH^+=\Delta\cH^+ + \frac{1}{\beta}\log(\pi_{N-1,N}(\bfq_{new},-\bfp_{new}; \tilde{\bfq}_1^{N-1}, -\tilde{\bfp}_1^{N-1})), 
\]
provided that we have
\[
(\bfq^N, -\bfp^N)= F_{T_1}((\tilde{\bfq}_1^{N-1}, \bfq_{new}), (-\tilde{\bfp}_1^{N-1}, -\bfp_{new})).
\]

Hence, one has
\begin{multline*}
    \Gamma_T(\tilde \bfq^{N-1},-\tilde \bfp^{N-1},\tilde n,-\tilde p_n\to \bfq^N,-\bfp^N,n,-p_n)=\delta(\tilde n-n-\Delta n)
  \int  \mathbf{1}_{\{K(\tilde{p}_n)\geq\delta \cH^+\}}\pi_{N-1,N}\\
  \delta ((\bfq^N,-\bfp^N)-F_{T_1}((\tilde{\bfq}_1^{N-1}, \bfq_{new}), (-\tilde{\bfp}_1^{N-1}, -\bfp_{new})))\delta(p_n-\tilde p_n-\delta\cH^+)d\bfp_{new} d\bfq_{new}
\end{multline*}

Since $F_{T_1}$ is volume-preserving, we have
\begin{multline*}
\delta ((\bfq^N,-\bfp^N)-F_{T_1}((\tilde{\bfq}_1^{N-1}, \bfq_{new}), (-\tilde{\bfp}_1^{N-1}, -\bfp_{new})))\\
=\delta (F_{-T_1}(\bfq^N,-\bfp^N)-((\tilde{\bfq}_1^{N-1}, \bfq_{new}), (-\tilde{\bfp}_1^{N-1}, -\bfp_{new}))).
\end{multline*}
By the reversibility, the above is then reduced to
\begin{multline*}
\delta (F_{T_1}(\bfq^N,\bfp^N)-((\tilde{\bfq}_1^{N-1}, \bfq_{new}), (\tilde{\bfp}_1^{N-1}, \bfp_{new})))
=\\
\delta(F_{T_1}(\bfq^N,\bfp^N)|_1^{d_{N-1}}-(\tilde{\bfq}_1^{N-1}, \tilde{\bfp}_1^{N-1}))
\otimes \delta(F_{T_1}(\bfq^N,\bfp^N)|_{d_{N-1}+1}^{d_N}-(\bfq_{new}, \bfp_{new})).
\end{multline*}
Here, the two $\delta$'s on the right-hand side indicate the Dirac delta measures in $\R^{d_{N-1}}$ and $\R^{d_n-d_{N-1}}$ respectively. Hence, by evaluating the integral of $(\bfq_{new}, \bfp_{new})$, one finds that
\begin{multline*}
    \Gamma_T(\tilde \bfq^{N-1},-\tilde \bfp^{N-1},\tilde n,-\tilde p_n\to \bfq^N,-\bfp^N,n,-p_n)=\\
    \delta(\tilde n-n-\Delta n)
    \mathbf{1}_{\{K(\tilde{p}_n)>\delta \cH^+\}} \pi_{N-1, N}\delta(F_{T_1}(\bfq^N,\bfp^N)|_1^{d_{N-1}}-(\tilde{\bfq}_1^{N-1}, \tilde{\bfp}_1^{N-1}))\delta(p_n-\tilde p_n-\delta\cH^+)
\end{multline*}
where the $(\bfq_{new}, \bfp_{new})$ variables in the expressions of $\delta\cH^+$ and $\pi_{N-1,N}$ are replaced by $F_{T_1}(\bfq^N,\bfp^N)|_{d_{N-1}+1}^{d_N}$. 
 We also have that
\begin{equation*}
    \bar\pi(\bfq^{N},\bfp^{N},n,p_n)=\bar\pi(\tilde \bfq^{N-1},\tilde \bfp^{N-1},\tilde n,\tilde p_n)\pi_{N-1,N}=\bar\pi(\tilde \bfq^{N-1},-\tilde \bfp^{N-1},\tilde n,-\tilde p_n)\pi_{N-1,N}.
\end{equation*}
Noting also that $\delta\cH^+=-\delta\cH^-$ and the two indicator functions are also the same (either $1$ or $0$ and is determined by the starting and ending states), the detailed balance condition (\ref{detail}) holds.
\end{document}